\newcommand{\bm}[1]{\mbox{$\mathbf #1$}} 
\newcommand{\bY}{{\bm Y}} 
\newcommand{\bM}{{\bm M}} 
\newcommand{\bs}{{\bm s}} 
\newcommand{\half}{\frac 1 2}
\newcommand{\pc}{\mbox{\rm PC}\xspace}
\newcommand{\tpc}{\mbox{\rm$\widetilde{\mbox{PC}}$}\xspace}
\newcommand{\hpc}{\mbox{\rm$\widehat{\mbox{PC}}$}\xspace}
\newcommand{\lb}{\mbox{\rm LB}\xspace}
\newcommand{\slb}{\mbox{{\it{s}}\rm{LB}}\xspace}
\newcommand{\ulb}{\mbox{{\it{u}}\rm{LB}}\xspace}
\newcommand{\ub}{\mbox{\rm UB}\xspace}
\newcommand{\sub}{\mbox{{\it{s}}\rm{UB}}\xspace}
\newcommand{\uub}{\mbox{{\it{u}}\rm{UB}}\xspace}
\newcommand{\nlb}{\mbox{{\it{m}}\rm{LB}}\xspace}
\newcommand{\nub}{\mbox{{\it{m}}\rm{UB}}\xspace}
\newcommand{\oub}{\mbox{{\it{o}}\rm{UB}}\xspace}
\newcommand{\olb}{\mbox{{\it{o}}\rm{LB}}\xspace}
\newcommand{\tM}{\widetilde M\xspace} \newcommand{\tm}{\widetilde
  m\xspace}
\renewcommand{\eqref}[1]{\mbox{(\ref{eq:#1})}} \usepackage{soul}
\newcommand{\setto}{\leftarrow} \newcommand{\defeq}{:=}
 \newcommand{\etc}{{\em
    etc.\/}\xspace} \newcommand{\ie}{{\em i.e.\/}\xspace}
\newtheorem{theorem}{Theorem}
\newcommand{\thmref}[1]{\mbox{Theorem~\ref{thm:#1}}}
\newcommand{\halm}{\hspace*{\fill} $\Box$\par}
\newenvironment{proof}{\noindent {\bf
    Proof.   }}{\halm\vspace{\baselineskip}}
\newenvironment{proof0}[1]{\noindent {\bf Proof of
    {#1}.   }}{\halm\vspace{\baselineskip}}
\newtheorem{lemma}{Lemma}
\newcommand{\lemref}[1]{\mbox{Lemma~\ref{lem:#1}}}
\newtheorem{cor}{Corollary}
\newcommand{\corref}[1]{\mbox{Corollary~\ref{cor:#1}}}
\newcommand{\itref}[1]{\mbox{\ref{it:#1}}}
\newtheorem{prop}{Proposition}
\newcommand{\propref}[1]{\mbox{Proposition~\ref{prop:#1}}}
\newcommand{\secref}[1]{\mbox{\S~\ref{sec:#1}}}
\newcommand{\tabref}[1]{\mbox{Table~\ref{tab:#1}}}
\newcommand{\appref}[1]{\mbox{Appendix~\ref{sec:#1}}}
\newcommand{\cip}{\mbox{$\perp\!\!\!\perp$}}
\newcommand{\indo}[2]{\mbox{$#1 \,\cip\, #2$}}
\title{Bounding Causes of Effects with Mediators}
\author{
\textbf
  {Philip Dawid}\thanks{University of Cambridge\quad
  \href{mailto:apd@statslab.cam.ac.uk}{apd@statslab.cam.ac.uk}} \and
  \textbf
{Macartan Humphreys}\thanks{Columbia University
  \& WZB Berlin\quad
  \href{mailto:mh2245@columbia.edu}{mh2245@columbia.edu}} \and
  \textbf
{Monica Musio}
  \thanks{Universit\`a degli Studi di Cagliari\quad
  \href{mailto:mmusio@unica.it}{mmusio@unica.it}}}
\date{\normalsize\today}
\begin{document}

\maketitle

\begin{abstract}
  \noindent Suppose $X$ and $Y$ are binary exposure and outcome
  variables, and we have full knowledge of the distribution of $Y$,
  given application of $X$.  From this we know the average causal
  effect of $X$ on $Y$.  We are now interested in assessing, for a
  case that was exposed and exhibited a positive outcome, whether it
  was the exposure that caused the outcome.  The relevant
  ``probability of causation", \pc, typically is not identified by the
  distribution of $Y$ given $X$, but bounds can be placed on it, and
  these bounds can be improved if we have further information about
  the causal process.  Here we consider cases where we know the
  probabilistic structure for a sequence of complete mediators between
  $X$ and $Y$.  We derive a general formula for calculating bounds on
  \pc for any pattern of data on the mediators (including the case
  with no data).  We show that the largest and smallest upper and
  lower bounds that can result from any complete mediation process can
  be obtained in processes with at most two steps.  We also consider
  homogeneous processes with many mediators.  $\pc$ can sometimes be
  identified as 0 with negative data, but it cannot be identified at 1
  even with positive data on an infinite set of mediators.  The
  results have implications for learning about causation from
  knowledge of general processes and of data on cases.
\end{abstract}


\section{Introduction}

Even the best possible evidence regarding the effects of a treatment
on an outcome is generally not enough to identify the probability that
the outcome was caused by the treatment.

For instance, researchers conducting randomised controlled trials may
determine that providing a medicine to school children increases the
overall probability of good health from one third to two thirds.  This
information, no matter how precise, is not enough to answer the
following question: Is Ann healthy because she took the medicine?  It
is not even enough to answer the question probabilistically.  The
reason is that, consistent with these results, it may be that the
medicine makes a positive change for 2 out of 3 students, but an
adverse change for the remainder: in that case the medicine certainly
helped Ann.  But it might alternatively be that the medicine makes a
positive change for 1 in 3 children but no change for the others.  In
that case the chances it helped Ann are just 1 in 2.  Of the children
taking the medicine, two thirds are healthy.  Half of these are
healthy because of the medicine, whereas the other half would have
been healthy anyway.

Put differently, the experimental data identifies the ``effects of
causes,'' (EoC) but we are interested in the reverse problem, of
quantifying ``causes of effects'' (CoE).  The CoE task of defining and
assessing the {\em probability of causation\/}
\citep{robins1989probability} in an individual case has been
considered by \citet{tian/pearl:probcaus,dawid2011role,
  yamamoto2012understanding,pearl2015causes,apd/mm/sef:ba,dawid2016bounding,dmm:lpr,murtas2017new}.
Note that this is distinct from the ``reverse causal question'' of
\citet{gelman2013ask}, which is an EoC task aimed at ascertaining
which causes have an effect on an outcome.

To understand causes of effects better, we might seek additional
evidence along causal pathways.  For example, researchers evaluating
development programs specify ``theories of change'' and seek evidence
for intermediate outcomes along a pathway linking treatment to
outcomes---most simply, Was the treatment received?  Was the medicine
ingested?  \citet{Van-Evera:1997} describes various tests that might
be implemented using such ancillary evidence.  A ``smoking gun test''
searches for evidence that, though unlikely to be found, would give
great confidence in a claim if it were to be found; a ``hoop test''
test is a search for evidence that we expect to find, but which, if
found to be absent, would provide compelling evidence against a
proposition (as if the proposition were asked to jump through a hoop).

Sometimes many points along a causal pathway are investigated.  An
intervention might be to provide citizens with information on
political corruption, in the hope that this will lead to ultimate
changes in politicians' behavior.  Researchers might then check many
points along a chain of intermediate outcomes.  Was the political
message delivered?  Was it understood? Was it believed? Did it induce
a change in behavior by citizens? Did this in turn produce a change in
behavior by politicians?

Seeing positive evidence at many points along a such a causal chain
would appear to give confidence that the final outcome is indeed
\textit{due} to the conjectured cause.  This is the core premise of
``process tracing,'' as deployed by qualitative political scientists
\citep{collier2011understanding}, as well as of mixed methods research
as used in development evaluation \citep{white2009theory}.  In the
most optimistic accounts it is assumed that, as one gets close enough
to a process, by observing more and more links in a chain, the link
between any two steps becomes less questionable and eventually the
causal process reveals itself \citep[581]{mahoney2012logic}.

We here provide a comprehensive treatment of the scope for inferences
of this form from knowledge of causal chains.  We obtain a general
formula for calculating bounds on the probability of causation, for an
arbitrary pattern of data along chains of binary variables.  We derive
implications of this formula, and calculate the largest and smallest
upper and lower bounds achievable from any causal chain consistent
with the known relation between $X$ and $Y$.  We give special
attention to what might appear to be the best possible conditions:
those in which causal processes really do follow a simple causal
chain, in which researchers have complete experimental evidence about
the probabilistic relationship between any two consecutive nodes in
the chain, in which the chain is arbitrarily long, in which the causal
effect of each intermediate variable on its successor climbs to 1, and
in which researchers observe outcomes consistent with positive effects
at every point on the chain.  We show that such information does
indeed increase confidence that an outcome can be attributed to a
cause and, for homogeneous chains at least, that the longer the chain
the better.  However, we find that even under these ideal conditions
our ability to narrow the bounds for the probability of causation can
be modest.  In the example of attributing Ann's health to good
medicine, a homogeneous process with arbitrarily many positive
intermediate steps observed might only tighten the bounds from
$[.5, 1]$ to $[.58, 1]$.

In contrast, we show that non-homogeneous processes can tighten the
bounds considerably.  For example, suppose Ann was prescribed the
medicine and recovered.  If we know that being prescribed the medicine
is the only way in which Ann could have obtained and taken the
medicine, and that taking the medicine helps anyone who would
otherwise be sick, then with positive evidence on a single
intermediate point on the causal chain---that Ann did indeed take the
medicine---we can identify the probability that prescribing the
medicine caused Ann's recovery at $2/3$.  (We are still short of 1,
because it is possible that Ann would have recovered even without the
medicine.)  A process like this, in which we observe a ``necessary
condition for a sufficient condition'', provides the largest possible
lower bound on the probability of causation available from any
observations on any chain.  At this point we have done the best
possible and more data along the chain will not help.

Although achieving identification of the probability of causation at 1
is generally elusive, negative data can yield identification at 0,
either in two steps from a heterogeneous process, or from alternating
data along an infinite homogeneous chain.  In this sense, information
on mediators can support ``hoop'' tests but not ``smoking gun'' tests.


\subsection{Plan of paper}
\label{sec:plan}

Existing results \citep{dawid2016bounding} have considered the case of
a single unobserved mediator.  We generalize this in two ways. First,
we consider situations with chains of arbitrary length.  Secondly, we
calculate bounds for general data, that is, for situations in which
the values of none, some or all the mediators are observed.

We proceed as follows.  Section~\ref{sec:prelim} introduces the
set-up, and provides general formulae for bounding the probability of
causation for a simple one-step process.  In \secref{med} we extend
these results to cases in which we know the structure of a complete
mediation process.  We consider various degrees of knowledge of the
values of the mediators for the individual case at hand: all
unobserved, all observed, or just some observed.  Our main result is
\thmref{prodpc}, which provides a general formula applicable to all
cases.

Section~\ref{sec:imp} draws out the detailed implications of this
result in a variety of contexts.  In \secref{nonhom} we investigate
the largest achievable lower and upper bounds from any sequence, and
find that these can be achieved by heterogeneous two-step processes.
Section~\ref{sec:hom} examines the case of homogeneous processes of
arbitrary length.  We show that an alternating pattern for the values
at all intermediate points can lead to a limiting value of 0 for the
probability of causation.  However, it is not generally possible for
even the most positive evidence to identify the probability of
causation---and {\em a fortiori\/} not possible to identify it at
1---even in the limit of infinitely many steps.
Section~\secref{strat} considers implications of our results for
gathering data on mediators.  In \secref{comp} we compare the bounds
based on knowledge of mediator processes with those achievable from
knowledge of covariates, which can be much tighter.  We summarise our
findings in \secref{conc}.  Various technical details for the proofs
in the paper are elaborated in three appendices.

\section{Preliminaries}
\label{sec:prelim}
We consider a binary treatment variable $X$ and binary outcome
variable $Y$.  We suppose we have access to experimental (or
unconfounded observational) data supplying values for
$\Pr(Y=y \mid X\setto x)$, where we use the notation $X\setto x$ to
denote a regime in which $X$ is set to value $x$ by external
intervention.

Define
\begin{eqnarray*}
  \tau &\defeq& \Pr(Y=1\mid X\setto 1) - \Pr(Y=1\mid X\setto 0)\\
  \rho &\defeq& \Pr(Y=1\mid X\setto 1) - \Pr(Y=0\mid X\setto 0).    
\end{eqnarray*}
Then $\tau$ is the {\em average causal effect\/} of $X$ on $Y$, while
$\rho$ is a measure of how common $Y=1$ is.

The transition matrix from $X$ to $Y$ (where the row and column labels
of any such matrix are implicitly $0$ and $1$ in that order) can be
written:
\begin{equation}
  \label{eq:P}
  P = P(\tau,\rho) \defeq\left(
    \begin{array}[c]{cc}
      \half(1+\tau-\rho) & \half(1-\tau+\rho)\\
      \half(1-\tau-\rho) & \half(1+\tau+\rho)
    \end{array}
  \right).
\end{equation}
All entries of $P$ must be non-negative: this holds if and only if
\begin{equation}
  \label{eq:rhotau}
  |\rho| + |\tau| \leq 1.    
\end{equation}
We have equality in \eqref{rhotau} if and only if one of the entries
of \eqref{P} is 1, in which case we term $P$ {\em degenerate\/}.  For
$\tau\geq 0$, this will happen if either $\rho = 1-\tau$, in which
case $\Pr(Y=1\mid X=1)=1$ and $X=1$ can be thought of as a sufficient
condition for $Y=1$; or $\rho = \tau-1$, in which case
$\Pr(Y=1\mid X=0)=0$, and $X=1$ can be thought of as a necessary
condition for $Y=1$.  Defining, for $ \tau\geq 0$,
\begin{equation}
  \label{eq:sigma}
  \sigma :=\frac{\rho}{1-\tau},
\end{equation}
we might thus regard $\sigma\in[-1,1]$ as measuring the
\textit{relative sufficiency} of $X=1$ for $Y=1$.\footnote{Although we
  do not focus on it, for $\tau< 0$ the analogous quantity
  $\frac{-\rho}{1+\tau}$ can be interpreted as the relative
  sufficiency of $X=1$ for $Y=0$.}



\subsection{Potential outcomes and causes of effects}
\label{sec:prcaus}

While knowledge of the transition matrix $P$, and in particular the
``average causal effect'' $\tau$, is directly relevant for EoC
(``effects of causes'') analysis, it is not enough to support CoE
(``causes of effects'') analysis.  For this we need to introduce the
pair of {\em potential outcomes\/}, $\bY = (Y_0,Y_1)$, where we
conceive of $Y_x$ as the value $Y$ would take, if $X\leftarrow x$.  We
regard both $Y_0$ and $Y_1$ as existing simultaneously, even prior to
setting the value of $X$, and as having a bivariate probability
distribution.

We can now define the following events in terms of $\bY$ (where
$\overline x$ denotes $1-x$, the value distinct from $x$, \etc):
\begin{description}
\item[General causation] $C^{(X,Y)}$ := ``$Y_1 \neq Y_0$''.
  
  That is, changing the value of $X$ will result in a change to the
  value of $Y$.  We can also describe this as ``$X$ affects $Y$.''

  When the relevant variables $X$ and $Y$ are clear from the context
  we will simplify the notation to $C$.


\item[Specific causation] $C^{(X,Y)}_{xy}$ :=
  ``$Y_x = y, Y_{\overline x} = \overline y$'' (for $x, y = 0$ or
  $1$).

  That is, changing the value of $X$ from $x$ to $\overline x$ would
  change the value of $Y$ from $y$ to $\overline y$.  We can also
  describe this as ``$X=x$ causes $Y=y$."  When the relevant variables
  $X$ and $Y$ are clear from the context we will simplify the notation
  to $C_{xy}$.

  We note that $C_{xy} = C_{\overline x \overline y}$.

\item[Probability of Causation.]

  In cases of interest we will have observed $X=x, Y=y$, and want to
  know {\em the probability that $X$ caused $Y$\/}, given this
  information.  We 
  denote this quantity by $\pc_{xy}^{(X,Y)}$, or $\pc_{xy}$ when the
  relevant variables $X$ and $Y$ are clear from the context.  Thus
  \begin{equation}
    \label{eq:pcxy}
    \pc_{xy} =  \Pr(C \mid X=x, Y=y) = \Pr(C_{xy} \mid X=x, Y_x=y).
  \end{equation}    
\end{description}

The joint distribution for $\bY$, while constrained by knowledge of
the transition matrix $P$, is in general not fully determined by it.
Rather, we can only deduce that it has the form of \tabref{y0y1},
where the marginal probabilities agree with \eqref{P} according to
$\Pr(Y_x = y) = \Pr(Y = y \mid X \setto x)$.
\begin{table}[h] \centering
  \begin{tabular}[c]{c|cc|c} \multicolumn{1}{c}{}& $Y_1=0$ & \multicolumn{1}{c}{$Y_1=1$}\\\hline
    $Y_0 = 0$ & $\half(1-\rho - \xi)$ & $\half(\xi+\tau)$ & $\half(1+\tau-\rho)$\\
    $Y_0 = 1$ & $\half (\xi-\tau)$ & $\half(1+\rho-\xi)$ & $\half(1-\tau+\rho)$ \\\hline
                                                 &  $\half(1-\tau-\rho)$ & $\half(1+\tau+\rho)$ & 1
  \end{tabular}
  \caption{$\Pr(Y_0=y_0, Y_1=y_1)$}
  \label{tab:y0y1}
\end{table}
However, the internal entries of \tabref{y0y1} are not determined by
$P$, but have one degree of freedom, expressed by the ``slack''
quantity $\xi$ = $\xi(P)$.  We see that
\begin{equation}
  \label{eq:Cineq}
  \xi = \Pr(Y_0 = 0, Y_1=1) + \Pr(Y_0 = 1, Y_1=0)  = \Pr(C),
\end{equation}
the probability of general causation.

The only constraints on $\xi$ are that all internal entries of
\tabref{y0y1} must be non-negative, which holds if and only if
\begin{equation}
  \label{eq:basicineq}
  |\tau| \leq \xi \leq  1-|\rho|.
\end{equation}
In particular $\xi$, and thus the bivariate distribution of
$(Y_0, Y_1)$ in \tabref{y0y1}, is uniquely determined by $P$ if and
only $P$ is degenerate.

We further note
\begin{eqnarray}
  \label{eq:c00}
  \Pr(C_{00}) =   \Pr(C_{11}) &=& \half(\xi+\tau)\\
  \label{eq:c01}
  \Pr(C_{01}) =   \Pr(C_{10}) &=& \half(\xi-\tau)
\end{eqnarray}
whence, by \eqref{basicineq},
\begin{eqnarray}
  \label{eq:00ineq}
  \max\{0, \tau\} \,\,\leq \,\,\Pr(C_{00})=   \Pr(C_{11}) &\leq& \half(1 + \tau -  |\rho|)\\
  \label{eq:01ineq}
  \max\{0, -\tau\} \,\,\leq\,\, \Pr(C_{01}) =   \Pr(C_{10}) &\leq& \half(1- \tau -  |\rho|).
\end{eqnarray}


Throughout this article we shall assume {\em no confounding\/},
expressed mathematically as $\indo X \bY$.  Then
\begin{eqnarray*}
  \pc_{xy} &=& \frac{\Pr(C_{xy})}{\Pr(Y_x=y)}\\
           &=& \frac{\Pr(C_{xy})}{\Pr(Y=y \mid X\setto x)}
\end{eqnarray*}
which is thus subject to the interval bounds, given by \eqref{00ineq}
or \eqref{01ineq}, as appropriate, divided by the known entry
$\Pr(Y=y \mid X\setto x)$ of the transition matrix $P$.

This analysis delivers the following lower and upper bounds (prefix
``{\it s}'' for ``simple''):
\begin{eqnarray}
  \label{eq:simpbound00}
  \slb_{00} :=
  \frac{\max\{0,\tau\}}{\Pr(Y=0 \mid X \setto 0)}\,\,\, \leq \,\,\, 
  \pc_{00} &\leq&  \frac{\half(\tau + 1-|\rho|)}{\Pr(Y=0 \mid X \setto 0)}
                  =: \sub_{00}
  \\
  \label{eq:simpbound01}
  \slb_{10} :=
  \frac{\max\{0,-\tau\}}{\Pr(Y=0 \mid X \setto 1)}\,\,\, \leq \,\,\,   
  \pc_{10} &\leq& \frac{\half(1- |\rho| - \tau)}{\Pr(Y= 0\mid X \setto 1)}
                  =: \sub_{10}
  \\
  \label{eq:simpbound10}
  \slb_{01} := 
  \frac{\max\{0,-\tau\}}{\Pr(Y=1 \mid X \setto 0)}\,\,\, 
  \leq \,\,\, \pc_{01} &\leq&  \frac{\half(1-|\rho|-\tau)}{\Pr(Y=1 \mid X \setto 0)}
                              =: \sub_{01}
  \\
  \label{eq:simpbound11}
  \slb_{11} := 
  \frac{\max\{0,\tau\}}{\Pr(Y=1 \mid X \setto 1)}\,\,\, \leq \,\,\, 
  \pc_{11}  &\leq& \frac{\half(\tau +1 -|\rho|)}{\Pr(Y=1 \mid X \setto 1)}
                   =: \sub_{11}.
\end{eqnarray}
In the absence of additional information, the above bounds constitute
the best available inference regarding the probability of causation.

Specifically, when $\tau\geq 0$, on defining
\begin{eqnarray}
  \label{eq:gamma}
  \gamma &:=& \frac{1-\tau-|\rho|}{1-\tau+|\rho|}=\frac{1-|\sigma|}{1+|\sigma|} \\
  \label{eq:delta}
  \delta &:=& \frac{1+\tau-|\rho|}{1+\tau+|\rho|}
\end{eqnarray}
we have the following upper bounds:

For $\rho \geq 0$:
\begin{eqnarray}
  \label{eq:s00++}
  \sub_{00} &=& 1\\
  \label{eq:s01++}
  \sub_{01} &=& \gamma\\
  \label{eq:s10++}
  \sub_{10} &=& 1\\
  \label{eq:s11++}
  \sub_{11} &=& \delta
\end{eqnarray}

For $\rho < 0$:
\begin{eqnarray}
  \label{eq:s00+-}
  \sub_{00} &=& \delta\\
  \label{eq:s01+-}
  \sub_{01} &=& 1\\
  \label{eq:s10+-}
  \sub_{10} &=& \gamma\\
  \label{eq:s11+-}
  \sub_{11} &=& 1
\end{eqnarray}



\subsection{Special case}
\label{sec:specsimp}
A particular interest is in cases where $\tau>0$ (so the overall
effect of $X$ and $Y$ is positive) and we observe positive outcomes,
$X=1$, $Y=1$.  In this case we omit the subscript $11$.  We have
\begin{equation}
  \label{eq:now}
  \pc = \frac{\xi+\tau}{2\Pr(Y=1 \mid X \setto 1)},
\end{equation}
and interval bounds given by
\begin{equation}
  \label{eq:simple}
  \slb = \frac{2\tau}{1+\tau+\rho} \leq \pc \leq \sub = 
  \left\{
    \begin{array}{ll}
      \delta& (\rho\geq 0)\\
      1 & (\rho < 0)
    \end{array}
  \right.
\end{equation}
This result agrees with
\citep{tian/pearl:probcaus,dawid2011role,dmm:lpr}.

$\pc$ is identified (\ie, the interval in \eqref{simple} reduces to a
single point) if and only if $|\rho| = 1-\tau$, which holds when $P$
is degenerate with either the lower left or upper right element of $P$
being 0.  In the former case $\pc=\tau$, while in the latter case
$\pc = 1$.

More generally, we have
$\slb = \tau/\Pr(Y=1 \mid X \setto 1) \geq \tau$, so $\pc \geq \tau$.

\section{Bounds from mediation}
\label{sec:med}

We now suppose that, in addition to $X$ and $Y$, we can gather data on
one or more binary mediator variables $M_1,\ldots,M_{n-1}$.  We also
define $M_0 \equiv X$ and $M_n \equiv Y$.  We are interested in
assessing the probability that $X=x$ caused $Y=y$ for a new case where
we have information on the values of some or all of the mediators
$M_1, \dots, M_{n-1}$.

We assume that the data are based on experiments, or in any case are
such as to allow us to determine the one-step interventional
probabilities $\Pr(M_{i+1} = m_{i+1} \mid M_i \setto m_i)$,
$i=0,\ldots,n-1$.  We shall here confine attention to the case of a
{\em complete mediation sequence\/}, where
$$\Pr(M_{i+1} = m_{i+1} \mid M_j \setto m_j, j=0,\ldots,i) = \Pr(M_{i+1} = m_{i+1} \mid M_i \setto m_i), \quad(i=0,\ldots,n-1).$$
We shall further suppose that, for any new case considered, there is
no confounding at every step, so that
$$\Pr(M_{i+1} = m_{i+1} \mid M_j = m_j, j=0,\ldots,i) = \Pr(M_{i+1} = m_{i+1} \mid M_i \setto m_i), \quad(i=0,\ldots,n-1).$$
In this case the sequence of observations
$(X\equiv M_0,\ldots,M_n\equiv Y)$ on a new case will form a
(generally non-stationary) Markov chain.  This is an empirically
testable consequence of our assumptions, assumptions which would
therefore be falsified if the Markov property is found to fail
(although those assumptions are not guaranteed to be valid when it is
found to hold.)


Let the transition matrix from $M_{i-1}$ to $M_{i}$ be
$P_i = P(\tau_i,\rho_i)$, and the overall transition matrix from $X$
to $Y$ be $P = P(\tau,\rho)$.  We shall write
\begin{equation}
  \label{eq:decomp}
  P =P_1 \mid P_2 \ldots \mid P_n
\end{equation}
to indicate that we are assuming the above mediation sequence, and
refer to \eqref{decomp} as a {\em decomposition\/} of the matrix $P$.
In particular we then have $P = P^{(n)} \defeq \prod_{i = 1}^n P_{i}$.


{W}e can readily
show 
by induction that
\begin{eqnarray}
  \label{eq:taun} 
  \tau = \tau^{(n)}  &\defeq& \prod_{i=1}^{n} \tau_i\\
                                                \label{eq:rhon} 
                                                \rho = \rho^{(n)} &\defeq & \sum_{i=1}^{n} \rho_i\prod_{j=i+1}^{n}\tau_j.
\end{eqnarray}
 
In particular, for the case $n=2$, \eqref{rhon} becomes
\begin{equation}
  \label{eq:rho2}
  \rho = \rho_1 \tau_2 + \rho_2.
\end{equation}

On account of \eqref{taun} we have the following result:
\begin{theorem}
  \label{thm:prod}
  The average causal effect of $X$ on $Y$ is the product of the
  successive average causal effects of each variable in the sequence
  on the following one.
\end{theorem}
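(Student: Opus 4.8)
The plan is to prove the equivalent algebraic statement $\tau = \tau^{(n)} = \prod_{i=1}^n \tau_i$ recorded in \eqref{taun}, since the theorem merely restates this verbally given the decomposition $P = P_1 \mid P_2 \ldots \mid P_n$ of \eqref{decomp}, under which $P = \prod_{i=1}^n P_i$. The slickest route I would take is to observe that the parameter $\tau$ is nothing but the determinant of the transition matrix. A direct computation on \eqref{P} gives
$$\det P(\tau,\rho) = \tfrac{1}{4}\left[(1+\tau-\rho)(1+\tau+\rho) - (1-\tau+\rho)(1-\tau-\rho)\right] = \tfrac{1}{4}\left[(1+\tau)^2 - (1-\tau)^2\right] = \tau,$$
the $\rho^2$ contributions cancelling. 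Since the determinant is multiplicative, I immediately get $\tau = \det P = \prod_i \det P_i = \prod_i \tau_i$, which is \eqref{taun}, and hence the theorem.

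Following instead the inductive route flagged in the text, I would first establish the two-step composition law by multiplying out $P(\tau_1,\rho_1)\,P(\tau_2,\rho_2)$ and checking that the result is again of the form \eqref{P}, namely $P(\tau_1\tau_2,\ \rho_1\tau_2+\rho_2)$; this simultaneously delivers \eqref{rho2}. The base case $n=1$ is immediate, and the inductive step writes $P^{(n)} = P^{(n-1)} P_n$, applies the composition law with parameters $(\tau^{(n-1)},\rho^{(n-1)})$ and $(\tau_n,\rho_n)$, and reads off that the $\tau$-parameter of the product is $\tau^{(n-1)}\tau_n = \prod_{i=1}^n \tau_i$.

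I expect no serious obstacle: the only substantive content is the determinant identity (or, equivalently, verifying that the family $\{P(\tau,\rho)\}$ is closed under multiplication and tracking how the parameters compose). The one point requiring care in the inductive approach is confirming that each partial product stays within the parametrized family, so that $\tau^{(k)}$ is well defined at every stage; the determinant observation bypasses this entirely, which is why I would present that argument first.
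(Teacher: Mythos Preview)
Your proposal is correct. The inductive sketch you give is exactly what the paper does: it simply asserts that \eqref{taun} and \eqref{rhon} ``can readily [be] shown by induction'' and then records \thmref{prod} as the verbal restatement of \eqref{taun}, with no further argument. Your two-step composition law $P(\tau_1,\rho_1)P(\tau_2,\rho_2)=P(\tau_1\tau_2,\rho_1\tau_2+\rho_2)$ is precisely the inductive step the paper leaves implicit, and it also yields \eqref{rho2} and \eqref{rhon} along the way.

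The determinant observation, on the other hand, is not in the paper and is a genuinely different and slicker route. Noting $\det P(\tau,\rho)=\tau$ turns \eqref{taun} into a one-line consequence of multiplicativity of determinants, with no need to verify closure of the family $\{P(\tau,\rho)\}$ under multiplication or to track both parameters through the induction. What the inductive approach buys is that it simultaneously delivers the formula \eqref{rhon} for $\rho$, which the determinant trick does not; but for \thmref{prod} itself, your determinant argument is the cleaner of the two.
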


Again, to conduct CoE rather than EoC analysis, we introduce, for
$i\geq 1$, bivariate variables
$$\bM_i \defeq (M_{i0}, M_{i1})$$
where $M_{im}$ denotes the potential value of $M_i$ under
$M_{i-1} \leftarrow m$, supposed unaffected by values of previous
$M$'s.  We further assume that the variable $\bM_i$ is common to all
the various worlds, whether actual or counterfactual, under
consideration.  The actually realised values $(M_i)$ satisfy
$M_i = M_{i,{M_{i-1}}}$.

As the expression of our ``no confounding'' assumptions, we impose
mutual independence between $X$, $\bM_1$,\ldots,$\bM_n$.

\begin{theorem}
  \label{thm:medaffect}
  $C^{(X,Y)} = \bigcap_{i=0}^{n-1}\,C^{(M_i,M_{1+1})}$.  That is to
  say, $M_0 \equiv X$ affects $M_n \equiv Y$ if and only if each $M_i$
  affects the next.
\end{theorem}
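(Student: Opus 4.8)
The plan is to reduce the claim to an elementary fact about compositions of self-maps of a two-point set. For each $i = 1,\ldots,n$ introduce the (random) response function $f_i:\{0,1\}\to\{0,1\}$ defined by $f_i(m) \defeq M_{im}$, the potential value of $M_i$ under $M_{i-1}\setto m$. By the definition of general causation, $C^{(M_{i-1},M_i)} = \{(M_i)_1 \neq (M_i)_0\} = \{f_i(0)\neq f_i(1)\}$ is exactly the event that $f_i$ is non-constant, so the right-hand side of the claim is the event that all of $f_1,\ldots,f_n$ are non-constant. (Here $\bigcap_{i=1}^n C^{(M_{i-1},M_i)}$ is merely a re-indexing of the stated $\bigcap_{i=0}^{n-1} C^{(M_i,M_{i+1})}$.)

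First I would establish a composition representation of the potential outcome $Y_x$. Under the regime $X\setto x$ we have $M_0 = x$, and the relation $M_i = M_{i,M_{i-1}} = f_i(M_{i-1})$, together with the hypothesis that each $\bM_i$ is common to all (actual and counterfactual) worlds, propagates the intervention down the chain to give $Y_x = M_n = (f_n\circ\cdots\circ f_1)(x)$. Writing $F \defeq f_n\circ\cdots\circ f_1$, the event $C^{(X,Y)} = \{Y_1\neq Y_0\}$ becomes $\{F(0)\neq F(1)\}$, i.e. the event that the composite $F$ is non-constant. The claim is therefore equivalent to the combinatorial identity: $F$ is non-constant if and only if every $f_i$ is non-constant.

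To prove that identity I would use that, on a two-element set, a self-map is non-constant if and only if it is a bijection. The inclusion $\bigcap_i C^{(M_{i-1},M_i)} \subseteq C^{(X,Y)}$ is then immediate, since a composition of bijections is a bijection, hence non-constant. For the reverse inclusion I would argue by contraposition: if some $f_k$ is constant with value $c$, then $(f_k\circ\cdots\circ f_1)(x) = c$ for every $x$, so $F$ factors through the single value $c$ and is constant, whence $Y_0 = Y_1$ and $C^{(X,Y)}$ fails. Combining the two inclusions yields $C^{(X,Y)} = \bigcap_{i=1}^{n} C^{(M_{i-1},M_i)} = \bigcap_{i=0}^{n-1} C^{(M_i,M_{i+1})}$, as required.

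The one genuinely delicate point, and the step I expect to need the most care, is the composition representation $Y_x = (f_n\circ\cdots\circ f_1)(x)$. This is where the structural hypotheses do all the work: it is precisely the assumption that $\bM_1,\ldots,\bM_n$ are shared across all worlds, combined with the identity $M_i = M_{i,M_{i-1}}$, that licenses treating each step as a fixed function and composing these functions under the intervention $X\setto x$. Once that representation is secured, the remainder is just the observation that a composite self-map of $\{0,1\}$ is non-constant exactly when none of its factors collapses the two inputs.
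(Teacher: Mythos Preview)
Your proposal is correct and is essentially the paper's argument, just rendered more formally: the paper argues verbally that changing $X$ propagates step by step to change $Y$ when each link ``affects the next'', and that a single non-affecting link freezes everything downstream; you make this precise by writing $Y_x = (f_n\circ\cdots\circ f_1)(x)$ and invoking the fact that on $\{0,1\}$ non-constant means bijective. Your explicit flagging of the composition representation as the place where the common-world assumption on the $\bM_i$ enters is a welcome clarification that the paper leaves implicit.
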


\begin{proof}
  Suppose first that each variable affects the next.  Then changing
  the value of $X$ will change that of $M_1$, which in turn will
  change that of $M_2$, and so on until the value of $Y$ is changed,
  so showing that $X$ affects $Y$.  Conversely, if, for some $j<n$,
  $M_j$ does not affect $M_{j+1}$, then, whether or not $M_j$ has been
  changed, the value of $M_{j+1}$ will be unchanged, whence so too
  will that of $M_{j+2}$, and so on until the value of $Y$ is
  unchanged, whence $X$ does not affect $Y$.
\end{proof}

\renewcommand{\theenumi}{(\roman{enumi})}

\begin{cor}
  \label{cor:xiprod}
  \quad\\
  \begin{enumerate}
  \item \label{it:xi1}
    $\Pr(C^{(X,Y)}) = \prod_{i=1}^{n}\,\Pr(C^{(M_{i-1},M_{1})})$
  \item \label{it:xi2} $\xi(P) = \prod_{i=1}^{n}\,\xi(P_i)$
  \item \label{it:xi3} Given the detailed information on the
    decomposition \eqref{decomp}, the constraints on $\xi = \xi(P)$
    are now:
    \begin{equation}
      \label{eq:xidecomp}
      |\tau| \leq \xi \leq \prod_{i=1}^{n}\left(1-|\rho_i|\right).
    \end{equation}
  \end{enumerate}

\end{cor}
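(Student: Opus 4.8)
The plan is to prove the three parts in order, since \itref{xi2} and \itref{xi3} follow quickly once \itref{xi1} is in hand.

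For part \itref{xi1}, I would begin from \thmref{medaffect}, which (after reindexing) gives $C^{(X,Y)} = \bigcap_{i=1}^{n} C^{(M_{i-1},M_i)}$. The key observation is that the event $C^{(M_{i-1},M_i)}$, namely ``$M_{i,0}\neq M_{i,1}$'', is a function of the bivariate variable $\bM_i$ alone. Since we have imposed mutual independence of $X,\bM_1,\ldots,\bM_n$, the events $C^{(M_0,M_1)},\ldots,C^{(M_{n-1},M_n)}$ are functions of distinct, mutually independent variables, and are therefore themselves mutually independent. Hence the probability of their intersection factorizes, giving $\Pr(C^{(X,Y)}) = \prod_{i=1}^n \Pr(C^{(M_{i-1},M_i)})$.

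Part \itref{xi2} is then immediate: by \eqref{Cineq} applied to the overall matrix $P$ and to each step matrix $P_i$, we have $\xi(P)=\Pr(C^{(X,Y)})$ and $\xi(P_i)=\Pr(C^{(M_{i-1},M_i)})$, so substituting these into \itref{xi1} yields $\xi(P)=\prod_{i=1}^n\xi(P_i)$. For part \itref{xi3}, the lower bound $|\tau|\leq\xi$ is simply the left-hand inequality of \eqref{basicineq} applied to $P$. For the upper bound I would apply the right-hand inequality of \eqref{basicineq} to each $P_i$, giving $\xi(P_i)\leq 1-|\rho_i|$; since every factor is non-negative, part \itref{xi2} then delivers $\xi=\prod_i\xi(P_i)\leq\prod_i(1-|\rho_i|)$. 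To see that this interval is exactly the set of achievable values, I would use that, by the independence of the $\bM_i$, each $\xi(P_i)$ may be chosen independently anywhere in its own feasible interval $[\,|\tau_i|,\,1-|\rho_i|\,]$; because the product map is continuous and each factor is non-negative, $\prod_i\xi(P_i)$ sweeps out the whole interval from $\prod_i|\tau_i|$ to $\prod_i(1-|\rho_i|)$, and by \eqref{taun} the lower endpoint equals $\prod_i|\tau_i|=|\tau|$.

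The individual inequalities here are routine; the one point requiring care is the independence argument underlying \itref{xi1}, namely confirming that each causation event depends only on its own $\bM_i$, so that the joint probability genuinely factorizes. The same structural fact reappears as the crux of the tightness claim in \itref{xi3}: it is precisely the free, independent variation of the per-step slacks that lets the product attain every value in the claimed interval, so I expect the bulk of the proof's substance to lie in stating that independence cleanly rather than in any computation.
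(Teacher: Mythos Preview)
Your proposal is correct and follows essentially the same approach as the paper: \itref{xi1} from \thmref{medaffect} plus mutual independence of the $\bM_i$, \itref{xi2} from \eqref{Cineq}, and \itref{xi3} from \itref{xi2} together with the per-step instance of \eqref{basicineq}. The only cosmetic difference is that the paper obtains the lower bound in \itref{xi3} via the product (using $\xi(P_i)\geq|\tau_i|$ and \eqref{taun}) rather than by applying \eqref{basicineq} directly to $P$; your added tightness argument is a welcome elaboration that the paper leaves implicit.
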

\begin{proof}
  \begin{description}
  \item[\itref{xi1}] By the assumed mutual independence of the
    $(\bM_i)$.
  \item[\itref{xi2}] By \eqref{Cineq}.
  \item[\itref{xi3}] By \itref{xi2}, \eqref{basicineq} for each $P_i$,
    and \eqref{taun}.
  \end{description}
\end{proof}

On account of \itref{xi1} we have:
\begin{cor}
  \label{cor:genprod}
  For any decomposition, the probability that $X$ affects $Y$ is the
  product of the probabilities that each variable in the sequence from
  $X$ to $Y$ affects the next in the sequence.
\end{cor}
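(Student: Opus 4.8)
The plan is to recognise that \corref{genprod} is merely the prose translation of the algebraic identity already secured in part~\itref{xi1} of \corref{xiprod}, so that essentially no new work is required. That part asserts
$$\Pr(C^{(X,Y)}) = \prod_{i=1}^{n}\Pr(C^{(M_{i-1},M_{i})}),$$
and the entire task reduces to reading each symbol back into words.

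First I would invoke the definition of \emph{general causation}: the event $C^{(X,Y)}$ is precisely ``$X$ affects $Y$'', so $\Pr(C^{(X,Y)})$ is exactly the probability that $X$ affects $Y$. Applying the identical reading to every factor, each $\Pr(C^{(M_{i-1},M_{i})})$ is the probability that $M_{i-1}$ affects its immediate successor $M_i$. Substituting these phrasings into the displayed identity reproduces verbatim the assertion of the corollary. Moreover, because \itref{xi1} was established from the assumed mutual independence of the $(\bM_i)$ alone, it holds regardless of the particular transition matrices chosen, which is what licenses the ``for any decomposition'' clause of the statement.

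There is no genuine obstacle to overcome here; the only point meriting care is to make explicit the terminological bridge---furnished by the definition of $C^{(X,Y)}$ together with \thmref{medaffect}---linking the probabilistic event of general causation to the informal phrase ``affects''. Once that identification is in place the corollary follows immediately.
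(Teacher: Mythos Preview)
Your proposal is correct and matches the paper's own treatment exactly: the paper simply states ``On account of \itref{xi1} we have'' before the corollary, so it too regards \corref{genprod} as nothing more than the verbal restatement of part~\itref{xi1} of \corref{xiprod}. Your added remark making explicit the terminological bridge between $C^{(X,Y)}$ and ``$X$ affects $Y$'' is a helpful elaboration but not a departure from the paper's approach.
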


On comparing \eqref{xidecomp} with \eqref{basicineq}, we see that
detailed knowledge of the mediation process has not changed the lower
bound for $\xi$.  However, the upper bound is typically reduced:
\begin{theorem}
  \label{thm:ubless}
  The upper bound of \eqref{xidecomp}, which takes into account the
  decomposition \eqref{decomp}, does not exceed the upper bound of
  \eqref{basicineq}, which ignores the decomposition.  It will be
  strictly less if all the $P_i$ are non-degenerate with
  $\rho_i \neq 0$.
\end{theorem}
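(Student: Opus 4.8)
The plan is to compare the two upper bounds head-on: I must show that $\prod_{i=1}^{n}(1-|\rho_i|) \le 1-|\rho|$, equivalently $|\rho| \le 1 - \prod_{i=1}^{n}(1-|\rho_i|)$, where by \eqref{rhon} we have $\rho = \sum_{i=1}^{n}\rho_i\prod_{j=i+1}^{n}\tau_j$.

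First I would pass to absolute values. The triangle inequality applied to \eqref{rhon} gives $|\rho| \le \sum_{i=1}^{n}|\rho_i|\prod_{j=i+1}^{n}|\tau_j|$. Each factor matrix $P_i$ satisfies \eqref{rhotau}, i.e. $|\tau_i| \le 1-|\rho_i|$, so I may replace every $|\tau_j|$ by the larger quantity $1-|\rho_j|$ to obtain $|\rho| \le \sum_{i=1}^{n}|\rho_i|\prod_{j=i+1}^{n}(1-|\rho_j|)$. It therefore suffices to establish the purely algebraic identity $\sum_{i=1}^{n}|\rho_i|\prod_{j=i+1}^{n}(1-|\rho_j|) = 1 - \prod_{i=1}^{n}(1-|\rho_i|)$.

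This identity is the one non-routine ingredient, and I would prove it by telescoping. Writing $r_i := |\rho_i|$ and $Q_k := \prod_{j=k}^{n}(1-r_j)$ with the convention $Q_{n+1}:=1$, the substitution $r_i = 1-(1-r_i)$ turns the $i$-th summand into $r_iQ_{i+1} = Q_{i+1} - (1-r_i)Q_{i+1} = Q_{i+1}-Q_i$, so the sum collapses to $Q_{n+1}-Q_1 = 1-\prod_{j=1}^{n}(1-r_j)$ (an induction on $n$ works equally well). Chaining the three steps yields the weak inequality $\prod_{i=1}^{n}(1-|\rho_i|) \le 1-|\rho|$.

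For the strict inequality I would track where slack enters. Non-degeneracy of each $P_i$ sharpens \eqref{rhotau} to $|\tau_i| < 1-|\rho_i|$, while $\rho_i\neq 0$ gives $|\rho_i|>0$. In the genuine mediation case $n\ge 2$, it then suffices to inspect a single summand of the second inequality, namely the one indexed by $i=n-1$: there $|\rho_{n-1}|\,|\tau_n| < |\rho_{n-1}|(1-|\rho_n|)$ holds strictly because $|\rho_{n-1}|>0$ and $|\tau_n|<1-|\rho_n|$, while all remaining summands are nonnegative. Hence the summed bound, and therefore the whole chain, is strict, giving $\prod_{i=1}^{n}(1-|\rho_i|) < 1-|\rho|$. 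I expect no obstacle beyond the telescoping identity; everything else is the triangle inequality together with the per-step constraint \eqref{rhotau}, and the strictness bookkeeping needs only one term with a positive coefficient and a strict factor gap.
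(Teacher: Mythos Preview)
Your argument is correct and is essentially the same as the paper's: both apply the triangle inequality to \eqref{rhon} and then replace each $|\tau_j|$ by $1-|\rho_j|$ using \eqref{rhotau}, with strictness coming from the non-degeneracy of one $P_j$ together with a nonzero $\rho$-coefficient. The only cosmetic difference is that the paper establishes the $n=2$ case and then appeals to induction, whereas you handle general $n$ directly via the telescoping identity $\sum_{i=1}^n r_i\prod_{j>i}(1-r_j)=1-\prod_i(1-r_i)$, which is precisely what the induction would unwind to.
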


\begin{proof}
  Consider first the case $n=2$.  Then
  \begin{eqnarray}
    \nonumber
    |\rho| &=& |\rho_1 \tau_2 + \rho_2|\quad\mbox{by \eqref{rho2}}\\
    \label{eq:z1} 
           &\leq& |\rho_1| |\tau_2| + |\rho_2|\\
    \label{eq:z2} 
           &\leq& |\rho_1| (1 - |\rho_2|) + |\rho_2| \quad\mbox{by \eqref{rhotau}.}
  \end{eqnarray}
  It follows that
  \begin{equation}
    \label{eq:mod2}
    (1-|\rho_1|)(1-|\rho_2|) \leq  1-|\rho|.
  \end{equation}
  Moreover, we shall have strict inequality in \eqref{z2}, and hence
  also in \eqref{mod2}, if $P_2$ is non-degenerate and
  $\rho_1 \neq 0$.

  The result for general $n$ follows easily by induction.
\end{proof}
We note that the above condition for strict inequality in
\eqref{mod2}, while sufficient, is not necessary.  For example, in the
case $n=2$ it will also hold if $\rho_1\tau_2$ and $\rho_2$ have
different signs, since then we would have strict inequality in
\eqref{z1}.

It follows from \eqref{xidecomp} and \eqref{mod2} that collapsing two
mediators into a single one can only increase the upper bound for
$\xi$:
\begin{cor}
  \label{cor:ubusmall}
  Consider two decompositions $ P =P_1 \mid P_2 \ldots \mid P_n$ and
  $ P =P_1 \mid \ldots \mid P_i \mid Q \mid P_{i+2} \mid \ldots \mid
  P_n$, where $Q = P_i P_{i+1}$.  Then the upper bound for $\xi$ for
  the former does not exceed that for the latter.
\end{cor}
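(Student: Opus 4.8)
The plan is to reduce the statement to the two-factor inequality \eqref{mod2} already established in the proof of \thmref{ubless}. Both upper bounds are instances of the right-hand side of \eqref{xidecomp}. For the finer decomposition the bound is $\prod_{k=1}^{n}\left(1-|\rho_k|\right)$, while for the coarser one, in which the consecutive pair $P_i, P_{i+1}$ is collapsed into the single matrix $Q = P_i P_{i+1}$, the bound is given by the same formula but with the two factors $(1-|\rho_i|)(1-|\rho_{i+1}|)$ replaced by $(1-|\rho_Q|)$, where $\rho_Q$ is the $\rho$-parameter of $Q$. Crucially, the two products agree in every factor indexed by $k\notin\{i,i+1\}$.

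First I would record that each of these common factors is non-negative: applying \eqref{rhotau} to $P_k$ gives $|\rho_k|\leq 1$, hence $1-|\rho_k|\geq 0$. Consequently, multiplying an inequality through by the common factors preserves its direction, so the claim reduces to the single comparison
$$(1-|\rho_i|)(1-|\rho_{i+1}|) \leq 1-|\rho_Q|.$$
To prove this I would observe that $Q = P_i P_{i+1}$ is precisely a product of two consecutive transition matrices, so its $\rho$-parameter obeys the $n=2$ composition rule \eqref{rho2}, namely $\rho_Q = \rho_i\tau_{i+1}+\rho_{i+1}$. This places us exactly in the setting of the proof of \thmref{ubless}, where inequality \eqref{mod2} asserts $(1-|\rho_1|)(1-|\rho_2|)\leq 1-|\rho|$ for any product of two transition matrices; applying it with $P_1,P_2$ taken to be $P_i,P_{i+1}$ yields the displayed inequality at once.

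I do not anticipate any real obstacle, since the substance has already been done in \thmref{ubless}. The only point needing care is the sign of the common factors, which is what licenses passing from the single merged-factor comparison to the comparison of the full products; once \eqref{mod2} is invoked, the conclusion is immediate.
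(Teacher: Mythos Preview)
Your proposal is correct and follows exactly the route the paper intends: the corollary is stated immediately after the sentence ``It follows from \eqref{xidecomp} and \eqref{mod2} that collapsing two mediators into a single one can only increase the upper bound for $\xi$'', and your argument simply unpacks that sentence, noting that the non-negative common factors reduce the comparison to \eqref{mod2}. The only addition is your explicit check that $1-|\rho_k|\geq 0$, which the paper leaves implicit.
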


\subsection{Bounds when mediators are unobserved}

Suppose first that, for the new case, we have observed $X=x, Y=y$, but
the values of the mediators are not observed.  Even in this case, as
shown for the two-term decomposition in \citet{dawid2016bounding},
knowledge of the decomposition \eqref{decomp} of $P$ can alter the
bounds for $\pc$.

Indeed, in this case \eqref{pcxy} still applies, where $\Pr(C_{xy})$
is given by \eqref{c00} or \eqref{c01} as appropriate, but now with
$\xi$ subject to the revised bounds of \eqref{xidecomp}.  In each case
the lower bound is unaffected, but, by \thmref{ubless}, the upper
bound is reduced.

This analysis delivers the following revised bounds (prefix ``{\it
  u}'' for ``unobserved mediators''):
\begin{eqnarray}
  \label{eq:ubound00}
  \ulb_{00} := \slb_{00} =
  \frac{\max\{0,\tau\}}{\Pr(Y=0 \mid X \setto 0)}\,\,\, \leq \,\,\, 
  \pc_{00} &\leq&  \frac{\tau + \prod_{i=1}^n \left(1-|\rho_i|\right)}
                  {2\Pr(Y=0 \mid X \setto 0)}
 %
                                                =: \uub_{00}
  \\
  \label{eq:ubound10}
  \ulb_{10} := \slb_{10} = 
  \frac{\max\{0,-\tau\}}{\Pr(Y=0 \mid X \setto 1)}\,\,\, \leq \,\,\,   
  \pc_{10} &\leq& \frac{\prod_{i=1}^n \left(1-|\rho_i|\right)- \tau}{2\Pr(Y= 0\mid X \setto 1)}
                    =: \uub_{10}
  \\
  \label{eq:ubound01}
  \ulb_{01} := \slb_{01} = 
  \frac{\max\{0,-\tau\}}{\Pr(Y=1 \mid X \setto 0)}\,\,\, 
  \leq \,\,\, \pc_{01} &\leq&  \frac{\prod_{i=1}^n \left(1-|\rho_i|\right)-\tau}{2\Pr(Y=1 \mid X \setto 0)}
                                       =: \uub_{01}
  \\
  \label{eq:ubound11}
  \ulb_{11} := \slb_{11} = 
  \frac{\max\{0,\tau\}}{\Pr(Y=1 \mid X \setto 1)}\,\,\, \leq \,\,\, 
  \pc_{11}  &\leq& \frac{\tau +\prod_{i=1}^n \left(1-|\rho_i|\right)}{2\Pr(Y=1 \mid X \setto 1)}
                    =: \uub_{11}
\end{eqnarray}

\subsection{Special case}
\label{sec:specunob}

In particular, for the case $\tau>0$, where we observe $X=1$, $Y=1$
(but the values of mediators are not observed), we have revised bounds
\begin{equation}
  \label{eq:simpleu}
  \ulb := \frac{2\tau}{1+\tau+\rho} \leq \pc \leq \frac{\tau + \prod_{i=1}^n \left(1-|\rho_i|\right)}{1+\tau+\rho} =:\uub.
\end{equation}
For $n=2$ this agrees with the analysis of \citet{dawid2016bounding}.

\subsection{Bounds when some or all mediators are observed}

Now suppose that, in addition to $X=x$, $Y=y$, we also observe data on
$k$ mediators ($0\leq k \leq n-1$) for the new case.  In particular we
observe $M_{i_r} = m_{i_r}$, for
$0 < i_1 < \ldots i_r \ldots < i_k < n$.  For notational simplicity we
write $\tM_r$ for $M_{i_r}$, $\tm_r$ for $m_{i_r}$.  We also identify
$\tM_0 \equiv X$ and $\tM_{k+1} \equiv Y$ (so $\tm_0 = x$,
$\tm_{k+1}=y$).

The relevant probability of causation is now
\begin{displaymath}
  \tpc_{xy} := \Pr\left(C \mid  \tM_r =
    \tm_r,\,\,i=0,\ldots,k+1\right).
\end{displaymath}

Note that in contrast to the difference between
\eqref{ubound00}--\eqref{ubound11} on the one hand and
\eqref{simpbound00}--\eqref{simpbound11} on the other hand, which
relate to the same quantity $\pc_{xy}$ but express different
conclusions about it, $\tpc_{xy}$ is a genuinely different quantity
from $\pc_{xy}$, since it conditions on different information about
the new case.

\begin{theorem}
  \label{thm:prodpc}
  Given observations on $X, \tM_1,\ldots,\tM_k, Y$, the probability
  that $X$ caused $Y$ is given by the product of the probabilities
  that each observed term in the sequence caused the next observed
  term:
  \begin{displaymath}
    \tpc_{xy}  = \prod_{r=0}^k \pc^{(\tM_r,\tM_{r+1})}_{\tm_r \,\tm_{r+1}}.   
  \end{displaymath}

\end{theorem}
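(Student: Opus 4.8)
The plan is to show that, once we condition on the observed values $\tilde m_0,\ldots,\tilde m_{k+1}$, the event $C^{(X,Y)}$ of general causation simultaneously (a) decomposes into a conjunction of segment-wise causation events, one per pair of consecutive observed nodes, and (b) factorises into independent contributions, one per segment. First I would establish the set-theoretic identity
$$C^{(X,Y)} = \bigcap_{r=0}^{k} C^{(\widetilde M_r,\widetilde M_{r+1})}.$$
This follows by applying \thmref{medaffect} to the full chain, giving $C^{(X,Y)} = \bigcap_{i=0}^{n-1} C^{(M_i,M_{i+1})}$, and again to each sub-chain $M_{i_r},\ldots,M_{i_{r+1}}$, giving $C^{(\widetilde M_r,\widetilde M_{r+1})} = \bigcap_{i=i_r}^{i_{r+1}-1} C^{(M_i,M_{i+1})}$; since the index blocks $[i_r,i_{r+1}-1]$ partition $\{0,\ldots,n-1\}$, grouping the $n$ factors of the former according to the $k+1$ segments yields the identity.

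Next I would set up the potential-outcome structure for each segment. For segment $r$, collect the bivariate variables that govern it, $\mathcal B_r \defeq (\bM_{i_r+1},\ldots,\bM_{i_{r+1}})$, and define the segment potential variable $(\widetilde M_{r+1,0},\widetilde M_{r+1,1})$ by composing the within-segment transitions; this is a function of $\mathcal B_r$ alone. Two observations are then key: the event $C^{(\widetilde M_r,\widetilde M_{r+1})} = \{\widetilde M_{r+1,0}\neq \widetilde M_{r+1,1}\}$ is a function of $\mathcal B_r$; and, because the realised values satisfy $\widetilde M_{r+1} = \widetilde M_{r+1,\widetilde M_r}$, the conditioning event rewrites as
$$\{\widetilde M_r = \tilde m_r,\ r=0,\ldots,k+1\} = \{X=\tilde m_0\}\cap\bigcap_{r=0}^{k}\{\widetilde M_{r+1,\tilde m_r}=\tilde m_{r+1}\},$$
where each segment factor $\{\widetilde M_{r+1,\tilde m_r}=\tilde m_{r+1}\}$ is likewise a function of $\mathcal B_r$. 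In effect this says the observed subsequence is itself a complete, non-confounded mediation chain.

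The blocks $\mathcal B_0,\ldots,\mathcal B_k$ are disjoint parts of the mutually independent family $X,\bM_1,\ldots,\bM_n$, hence are mutually independent and independent of $X$. Applying this to both the numerator $\Pr(C^{(X,Y)}\cap\{\text{conditioning}\})$ and the denominator $\Pr(\{\text{conditioning}\})$ — each being a product over segments of a probability depending only on the corresponding $\mathcal B_r$, times the common factor $\Pr(X=\tilde m_0)$ which cancels — the ratio factorises as
$$\tpc_{xy} = \prod_{r=0}^{k} \Pr\!\left(C^{(\widetilde M_r,\widetilde M_{r+1})} \mid \widetilde M_{r+1,\tilde m_r}=\tilde m_{r+1}\right).$$
Finally, using independence of $\widetilde M_r$ from its segment potential variable together with \eqref{pcxy}, each factor equals $\Pr(C^{(\widetilde M_r,\widetilde M_{r+1})}\mid \widetilde M_r=\tilde m_r,\widetilde M_{r+1}=\tilde m_{r+1}) = \pc^{(\widetilde M_r,\widetilde M_{r+1})}_{\tilde m_r\,\tilde m_{r+1}}$, which is the asserted product.

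I expect the main obstacle to be the bookkeeping in the setup step: correctly composing the intermediate transitions into a single segment potential variable, verifying that both the segment causation event and the segment transition event depend only on the block $\mathcal B_r$, and confirming mutual independence of these blocks. This measurability-and-independence check is precisely what turns the conditioning into an exact product rather than a mere bound. Once it is in place, the factorisation of numerator and denominator and the identification of each factor with the corresponding $\pc$ are routine consequences of \thmref{medaffect} and \eqref{pcxy}.
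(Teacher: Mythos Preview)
Your proposal is correct and follows essentially the same route as the paper: decompose $C^{(X,Y)}$ into the intersection of segment-wise causation events via \thmref{medaffect}, then invoke the mutual independence of the potential-outcome blocks (the ``no-confounding'' assumptions) to factorise the conditional probability into the product of per-segment probabilities of causation. The paper's proof compresses your measurability-and-independence bookkeeping into the single phrase ``using the no-confounding independence properties,'' so your write-up is a faithful and more explicit elaboration of the same argument rather than a different one.
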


\begin{proof}
  From \thmref{medaffect} we have
  \begin{displaymath}
    C = \bigcap_{r=0}^{k}\,C^{(\tM_r,\tM_{r+1})},
  \end{displaymath}
  whence, using the ``no-confounding'' independence properties,
  \begin{eqnarray}
    \nonumber  
    \tpc_{xy} &=& \prod_{r=0}^k\Pr\left(C^{(\tM_r,\tM_{r+1})}\,\right | \left.   \tM_r = \tm_r, \tM_{r+1} =  \tm_{r+1}\right)\\
    \label{eq:prodtilde}                    &=&  \prod_{r=0}^k \pc^{(\tM_r,\tM_{r+1})}_{\tm_r \,\tm_{r+1}}.   
  \end{eqnarray}
\end{proof}

Now since we have the decomposition information about the mediators
(if any) occurring between $\tM_r \equiv M_{i_r}$ and
$\tM_{r+1} \equiv M_{i_{r+1}}$, but not their values for the new case,
the bounds on any
factor 
in \eqref{prodtilde} will, {\em mutatis mutandis\/}, have the form of
the relevant expressions for $\ulb_{xy}$ and $\uub_{xy}$, as displayed
in \eqref{ubound00}---\eqref{ubound11}.  Then the overall lower
[resp., upper] bound on $\tpc_{xy}$ will be the product of these lower
[resp., upper] bounds, across all terms.  This procedure supplies a
complete recipe for determining the appropriate bounds on $\tpc_{xy}$
in the knowledge of the full decomposition of $P$ and the values of
the observed mediators for the new case.

\subsection{Special cases}
\label{sec:specobs}
Again consider the case $\tau > 0$, $X=Y=1$.  On account of
\eqref{taun} we can, after possibly switching the labels $0$ and $1$
for some of the $M_i$'s, take $\tau_i > 0$, all $i$.  We assume
henceforth that this is the case.  The above procedure then delivers
lower bound $0$ unless $\tm_i = \tm_{i-1}$, all $i$, so that
$m_i = 1$, all $i$.  In that case we obtain lower bound (with prefix
``{\it o}'' for ``observed mediators''):
\begin{eqnarray}
  \nonumber \olb &:=& \frac{\tau}{\prod_{r=0}^k \Pr\left(\tM_{r+1} = 1 \mid \tM_{r} = 1\right)}\\
  \label{eq:lbjoint}
                 &=& \frac{\tau}{\Pr\left(Y=1, \tM_r = \tm_r, r=2,\ldots,k \mid X=1\right)}.
\end{eqnarray}
It is easy to see that this lower bound can only increase if we
introduce further observed mediators.  It follows that the smallest
lower bound occurs when the are no observed mediators, when it reduces
to $\ulb=\slb$ as in \eqref{simpleu} and \eqref{simple}; while the
largest lower bound occurs when all mediators are observed (all taking
value 1)---that is to say, there is positive evidence for every link
in the mediation chain.

In the remainder of this paper we shall give special attention to this
case, and write simply $\tpc$ for $\tpc_{11}$, \etc\@ The bounds for
$\tpc$ are then:
\begin{equation}
  \label{eq:prodbounds}
  \olb := \prod_{i=1}^n\left(\frac{2\tau_i}{1+\tau_i+\rho_i}\right)\,\,\, 
  \leq \,\,\, \tpc \,\,\,
  \leq \,\,\,  \prod_{i=1}^n\left(\frac{1+\tau_i-|\rho_i|}{1+\tau_i+\rho_i}\right) =: \oub .
\end{equation}




The following result follows directly from the above considerations:
\begin{lemma}
  \label{lem:lbineq}
  The lower bound $\olb$ of \eqref{prodbounds} is at least as large as
  the lower bound $\slb$ of \eqref{simple}.
\end{lemma}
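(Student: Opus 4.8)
The plan is to strip both bounds down to a common form and reduce the claim to a single, interpretable inequality between transition probabilities. First I would use the $(1,1)$ entry of $P$, namely $\Pr(Y=1 \mid X \setto 1) = \half(1+\tau+\rho)$, to rewrite the simple lower bound of \eqref{simple} as $\slb = \tau/\Pr(Y=1 \mid X \setto 1)$---exactly the identity already recorded at the end of \secref{specsimp}. In the same way, writing $p_i := \Pr(M_i = 1 \mid M_{i-1} \setto 1) = \half(1+\tau_i+\rho_i)$, each factor of $\olb$ in \eqref{prodbounds} is $2\tau_i/(1+\tau_i+\rho_i) = \tau_i/p_i$, so that, using $\tau = \prod_i \tau_i$ from \eqref{taun},
$$\olb = \prod_{i=1}^n \frac{\tau_i}{p_i} = \frac{\tau}{\prod_{i=1}^n p_i}.$$
Since $\tau>0$ and each $p_i$ is positive (indeed $1+\tau_i+\rho_i \geq 2\tau_i>0$ by \eqref{rhotau} with $\tau_i>0$), the inequality $\olb \geq \slb$ is then equivalent to the purely multiplicative statement
$$\prod_{i=1}^n p_i \,\leq\, \Pr(Y=1 \mid X \setto 1).$$

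The decisive step is to read the right-hand side as a sum over paths. Because $P = \prod_{i=1}^n P_i$, expanding the matrix product gives
$$\Pr(Y=1 \mid X \setto 1) = \sum_{m_1,\ldots,m_{n-1}\in\{0,1\}} (P_1)_{1,m_1}(P_2)_{m_1,m_2}\cdots(P_n)_{m_{n-1},1}.$$
Every summand is a product of non-negative transition probabilities, and the single term in which all intermediate indices equal $1$ is precisely $\prod_{i=1}^n p_i$. Dropping the remaining non-negative summands would then yield the required inequality.

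The point to get right is conceptual rather than computational: conditioning only on the endpoints $X=Y=1$, as $\slb$ does, lets the chain reach $Y=1$ by any intermediate route, whereas the fully-observed bound $\olb$ is tied to the single ``all ones'' route, whose probability can only be the smaller of the two. There is thus no real analytic obstacle; the one hypothesis I would invoke explicitly is that each $P_i$ has non-negative entries, so that discarding the off-diagonal paths preserves the inequality. I note that the same conclusion also follows immediately from the monotonicity observation recorded after \eqref{lbjoint}---$\olb$ is the largest lower bound (all mediators observed) and $\slb=\ulb$ the smallest (none observed)---so the path expansion above is really just an explicit verification of that remark. Equality holds exactly when every alternative path from $X=1$ to $Y=1$ carries zero probability.
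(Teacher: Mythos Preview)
Your argument is correct and is essentially the paper's own: the paper simply states that the lemma ``follows directly from the above considerations,'' meaning the monotonicity remark after \eqref{lbjoint} that the denominator $\Pr(Y=1,\tM_r=1,\ldots \mid X=1)$ can only shrink as more mediators are observed at $1$, so $\olb$ (all observed) dominates $\slb=\ulb$ (none observed). Your path-expansion of $(P_1\cdots P_n)_{11}$ is precisely an explicit verification of that monotonicity, and you yourself note this equivalence; there is no substantive difference in approach.
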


It is not, however, always the case that $\oub \leq \sub$: see
\eqref{C} below.

\section{Implications}
\label{sec:imp}
Equation \eqref{prodtilde} provides a general formula for calculating
bounds on the probability of causation for any pattern of data
observed on mediating variables (including no data).

We now derive implications from this analysis.

\subsection{Largest and smallest upper and lower bounds}
\label{sec:nonhom}

Consider an arbitrary decomposition of $P$:
\begin{equation}
  \label{eq:arb}
  P = P_1 \mid P_2 \mid \ldots \mid P_n,
\end{equation}
with $P=P(\tau,\rho)$, $P_i=P(\tau_i,\rho_i)$.  We restrict attention
to the case $\tau >0$ and assume that variables are labeled so that
each $\tau_i > 0$.

We investigate the smallest and largest achievable values for
$\ulb, \uub, \olb, \oub, \nlb$, $\nub$ (prefix $m$ for mixed evidence)
and show that in each case these are achievable by decompositions
involving at most one mediator.

\begin{theorem}\label{thm:main}
  Let the (known, fixed) transition matrix from $X$ to $Y$ be
  $P = P(\tau, \rho)$, with $\tau>0$ and $|\rho| < 1-\tau$.  The
  largest and smallest upper and lower bounds from any complete
  mediation process for the case with mediators unobserved, for the
  case with positive outcomes on all mediators observed, and for mixed
  cases, that include some negative evidence on the mediators, are as
  given in \tabref{resultstable}.
  \begin{table}[btp]
    \centering
    \begin{tabular}{c|c|l|l|l}
      \hline 
      &	&  No evidence &   Positive evidence & Mixed evidence\\  \hline 
      Largest	& Upper &$\overline{\uub} = \frac{1+\tau-|\rho|}{1+\tau+\rho}$ & $\overline{\oub} = \min\{1, 1-\rho\}$  & $\overline{\nub} = 1$  \\ 
      & Lower & $\overline{\ulb} =\frac{2\tau}{1+\tau+\rho}$  & $\overline{\olb} = \frac{1+\tau-\rho}{2}$   & $\overline{\nlb} = 0$  \\ 	\hline 	
      Smallest	& Upper &$\underline{\uub} =\frac{2\tau}{1+\tau+\rho}$ (*) & $\underline{\oub} =\frac{2\tau}{1+\tau+\rho}$  (*) & $\underline{\nub} = 0$ (*)\\ 
      & Lower  & $\underline{\ulb} =\frac{2\tau}{1+\tau+\rho}$ & $\underline{\olb}=\frac{2\tau}{1+\tau+\rho}$& $\underline{\nlb} = 0$\\ 	\hline 	
    \end{tabular}
    \caption{Largest and smallest achievable upper and lower bounds from decompositions of any length, given no mediators observed, positive evidence observed for all mediators, or mixed evidence is observed.   (*) Indicates that \pc can be identified.}
    \label{tab:resultstable}
  \end{table}
  These can all be achieved by decompositions of length 1 or 2.
\end{theorem}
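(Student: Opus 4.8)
The plan is to treat each of the twelve entries of \tabref{resultstable} as an optimisation of one of the product bounds \eqref{simpleu} or \eqref{prodbounds} (and their mixed-evidence analogues) over \emph{all} decompositions $P = P_1\mid\cdots\mid P_n$ of arbitrary length, subject to each $\tau_i>0$, $\prod_i\tau_i=\tau$ and \eqref{rhon}. It is convenient to reparametrise each step by its two interventional probabilities $a_i:=\Pr(M_i=1\mid M_{i-1}\setto 0)$ and $b_i:=\Pr(M_i=1\mid M_{i-1}\setto 1)$, so that $\tau_i=b_i-a_i$ and $\rho_i=a_i+b_i-1$, and the per-step lower and upper factors of \eqref{prodbounds} become $\frac{2\tau_i}{1+\tau_i+\rho_i}=\frac{b_i-a_i}{b_i}$ and $\frac{1+\tau_i-|\rho_i|}{1+\tau_i+\rho_i}=\min\{1,(1-a_i)/b_i\}$. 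Writing $a^{(k)}:=\Pr(Y=1\mid M_k\setto 0)$ and $b^{(k)}:=\Pr(Y=1\mid M_k\setto 1)$ for the cumulative sub-chain from $M_k$ to $Y$ (so $a^{(0)}=\half(1-\tau+\rho)$, $b^{(0)}=\half(1+\tau+\rho)$, $a^{(n)}=0$, $b^{(n)}=1$), one has the one-step recursions $a^{(k-1)}=(1-a_k)a^{(k)}+a_k b^{(k)}$ and $b^{(k-1)}=(1-b_k)a^{(k)}+b_k b^{(k)}$, which will drive the inductions below.

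I would dispatch the easy entries first. Since $\ulb=\frac{2\tau}{1+\tau+\rho}$ depends only on the fixed pair $(\tau,\rho)$, its largest and smallest values coincide. For $\uub$ only $\prod_i(1-|\rho_i|)$ varies; by \thmref{ubless} and \corref{ubusmall} its maximum is $1-|\rho|$, attained at $n=1$, giving $\overline{\uub}$, while its minimum is $|\tau|=\tau$, which forces the interval \eqref{simpleu} to collapse and is realised by a two-step decomposition with both $P_i$ degenerate; this yields $\underline{\uub}$ and the identification marked (*). For positive evidence, \lemref{lbineq} gives $\olb\ge\slb$ and trivially $\oub\ge\olb$, so the smallest value of each is $\frac{2\tau}{1+\tau+\rho}$: attained at $n=1$ for $\underline{\olb}$, and for $\underline{\oub}$ by the degenerate ``sufficient-then-necessary'' two-step ($b_1=1$, $a_2=0$), for which both factors of $\oub$ and of $\olb$ equal $\tau_1=\frac{2\tau}{1+\tau+\rho}$, so the interval collapses (the second (*)).

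The substance lies in the two ``largest'' positive-evidence entries. For $\overline{\olb}$ I would exhibit the ``necessary-then-sufficient'' two-step ($P_1$ degenerate with $a_1=0$, $P_2$ degenerate with $b_2=1$), whose two factors of $\olb$ are $1$ and $\tau_2=\frac{1+\tau-\rho}{2}$, so $\olb=\frac{1+\tau-\rho}{2}=\overline{\olb}$ (the constraints $\tau_1,\tau_2\in(0,1)$ holding precisely because $|\rho|<1-\tau$). Optimality, i.e.\ $\olb\le 1-a^{(0)}=\frac{1+\tau-\rho}{2}$ for \emph{every} decomposition, I would prove by downward induction on $k$ with invariant $\prod_{i>k}\frac{b_i-a_i}{b_i}\le 1-a^{(k)}$: the base $k=n$ is $1\le 1$, and the step reduces, after substituting the recursion for $a^{(k-1)}$, to $1-a^{(k)}\ge b_k(b^{(k)}-a^{(k)})$, which holds since $b_k(b^{(k)}-a^{(k)})\le b^{(k)}-a^{(k)}\le 1-a^{(k)}$. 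For $\overline{\oub}$: if $\rho\le 0$ then already $n=1$ gives $\oub=1$; if $\rho>0$ I would use the ``attenuation-then-sufficient'' two-step ($\rho_1=0$, and $P_2$ degenerate sufficient), for which $\sub^{(1)}=1$ and $\sub^{(2)}=1-\rho$, so $\oub=1-\rho$; hence $\overline{\oub}=\min\{1,1-\rho\}$. Optimality $\oub\le\min\{1,1-\rho^{(k)}\}$ I would again prove by downward induction with this as the invariant, the step splitting on the sign of $\rho_k$ (for $\rho_k\le 0$ the factor is $1$ and the target only increases, since $\rho^{(k-1)}=\rho^{(k)}+\rho_k\tau^{(k)}$) and, for $\rho_k>0$, on whether $\rho^{(k)}\gtrless 0$, each branch collapsing to a one-line inequality such as $b_k(b^{(k)}-a^{(k)})\le 1$.

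Finally, for mixed evidence the recipe below \thmref{prodpc} shows that any sign change among the observed values makes one factor a cross-probability $\pc_{01}$ or $\pc_{10}$, whose simple lower bound is $0$ because all $\tau_i>0$; hence $\nlb=0$ identically, giving $\overline{\nlb}=\underline{\nlb}=0$. The largest upper bound $\overline{\nub}=1$ arises when the remaining factors contribute upper bound $1$, whereas the smallest $\underline{\nub}=0$ arises by crossing a single degenerate step against its effect, whose cross upper bound $\half(1-\tau_i-|\rho_i|)$ vanishes and forces $\tpc=0$ (the identification (*)). Since every optimum above is realised either at $n=1$ or by one of the explicit, mostly degenerate, two-step decompositions, the closing claim that length $1$ or $2$ always suffices follows. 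I expect the genuine obstacle to be the two optimality directions for $\overline{\olb}$ and $\overline{\oub}$: ruling out any improvement from arbitrarily long chains needs the right monotone invariant, but once $1-a^{(k)}$ and $\min\{1,1-\rho^{(k)}\}$ are identified, the inductions are short; the constructions and the remaining entries are routine.
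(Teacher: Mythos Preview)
Your proposal is correct and close in spirit to the paper's proof: you identify the same achieving decompositions (\eqref{ba1}, \eqref{ab1}, \eqref{C}) and handle the ``easy'' entries identically. Two comparisons are worth making.

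For the optimality of $\overline{\olb}$, your downward induction with invariant $\prod_{i>k}(b_i-a_i)/b_i\le 1-a^{(k)}$ is a genuinely different route from the paper's. The paper instead bounds each factor $\slb_i=(b_i-a_i)/b_i$ above by the leading diagonal entry $1-a_i=\half(1+\tau_i-\rho_i)$ (\lemref{lineq}), and then observes that the product of leading diagonal entries of the $P_i$ is dominated by the leading diagonal entry of $P$ itself, simply because all entries are nonnegative in a matrix product (\lemref{r1}, \corref{prod}). Your one-pass induction avoids the intermediate product $\prod_i(1-a_i)$; the paper's argument is perhaps more transparent because the key step is literally ``product of diagonals $\le$ diagonal of product.'' For $\overline{\oub}$, your invariant $\min\{1,1-\rho^{(k)}\}$ with a sign split is effectively the paper's \lemref{rhoineq} plus \lemref{rr} reorganised as an induction; the content is the same, though the paper spells out all four sign cases of \lemref{rr} where you only sketch them.

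One thin spot: your treatment of $\overline{\nub}=1$ says only that ``the remaining factors contribute upper bound $1$,'' which is not yet a construction. You need to exhibit a decomposition $P=P_1\mid P_2$ with $\rho_1\ge 0$, $\rho_2\le 0$ (so that $\sub_{10}^{(1)}=\sub_{01}^{(2)}=1$) \emph{and} $\Pr(M=0\mid X=1)>0$; the paper supplies these explicitly as \eqref{ext1} and \eqref{ext2}, with the sign of $\rho$ dictating which one is valid. Similarly, for $\underline{\nub}=0$ the paper names \eqref{ab1} with the mediator at $0$, so that the degenerate first step forces $\sub_{10}^{(1)}=0$; your ``crossing a single degenerate step against its effect'' is the right idea but should be tied to a concrete decomposition of the given $P$.
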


\begin{proof}
  See \appref{mainproof}.
\end{proof}

The largest upper bound with mediators unobserved, $\overline{\uub}$,
can be achieved without any mediators.  Since unobserved mediators do
not alter the lower bound we have
$\overline{\ulb} = \underline{\ulb} = {\slb}$.  In addition we have
$\underline{\uub} = {\slb}$, which is achievable, for example, from
the following decomposition:
\begin{equation}
  \label{eq:ba1}
  P = \left.
    \left(
      \begin{array}{cc}
        \frac{2\tau}{1+\tau+\rho} & \frac{1-\tau+\rho}{1+\tau+\rho} \\
        0 & 1
      \end{array}
    \right)\,
  \right|
  \left(
    \begin{array}[c]{cc}
      1 & 0\\
      \frac{1-\tau-\rho}2 &  \frac{1+\tau+\rho}2 
    \end{array}
  \right).
\end{equation}

Note that with this decomposition $\pc$ is identified {\em via\/} two
degenerate transition matrices: $X=1$ is a sufficient condition for
$M=1$, while $M=1$ is a necessary condition for $Y=1$.

The smallest upper and lower bounds available when mediators are
observed agree with the simple lower bound.  Positive evidence cannot
reduce the lower bound, but it can reduce the upper bound to the lower
bound, at which point $\tpc$ is identified.  This can be achieved by
the same decomposition given in \eqref{ba1}.

The largest upper bound with positive evidence on mediators,
$\overline{\oub}$, can exceed the simple upper bound when $\rho>0$.
It is achieved by the following two-term decomposition, involving a
single mediator:

\begin{equation}
  \label{eq:C}
  P = \left.
    \left(
      \begin{array}[c]{cc}
        \frac{1-\rho + \tau}{2(1-\rho)} & \frac{1 - \rho -  \tau}{2(1-\rho)} \\ 
        \frac{1 - \rho - \tau}{2(1-\rho)} & \frac{1-\rho + \tau}{2(1-\rho)}
      \end{array}
    \right)\,
  \right|
  \left(  
    \begin{array}{cc} 
      {1-\rho} & \rho 
      \\ 0 & 1	
    \end{array}
  \right).
\end{equation}


The lower bound can be raised with positive information on mediators,
and takes its largest value with the following degenerate two-term
decomposition $P = P_1 \mid P_2$, involving a single mediator:

\begin{equation}
  \label{eq:ab1}
  P = \left.
    \left(
      \begin{array}[c]{cc}
        1 & 0\\
        \frac{1-\tau-\rho}{1+\tau-\rho} & \frac{2\tau}{1+\tau-\rho}
      \end{array}
    \right)\,
  \right|
  \left( 
    \begin{array}{cc} 
      \frac{1+\tau-\rho}{2} & \frac{1-\tau+\rho}{2} \\
      0 & 1
    \end{array}
  \right).
\end{equation}

With this decomposition $\tpc$ is identified {\em via\/} two
degenerate transition matrices: in this case $X=1$ is a necessary
condition for $M=1$, while $M=1$ is a sufficient condition for $Y=1$.
The largest lower bound with positive evidence from this decomposition
is $\frac{1+\tau - \rho}{2}$ which can fall far short of 1, implying
that in general mediators cannot provide ``smoking gun'' evidence that
$X=1$ caused $Y=1$.

For the case with mixed evidence on the mediators the lower bound is
always 0.  The smallest upper bound is also 0, which can be achieved
by the decomposition \eqref{ab1} above, with the single mediator
observed at 0 (the key feature of this decomposition is that $Y=1$ can
not be caused by $M=0$).  In this case $\tpc$ is identified at 0,
showing that it is possible for negative data on mediators to provide
``hoop'' evidence that $X=1$ did not cause $Y=1$.  The highest upper
bound, $\nub = 1$, can be achieved by a two-step decomposition
$P(\tau,\rho) = P(\tau_1,\rho_1) \mid P(\tau_2,\rho_2)$, with the
mediator taking value 0.  For $\rho\leq 0$ this occurs with the
decomposition with parameters
\begin{equation}
  \label{eq:ext1}
  \tau_1 = \frac{2\tau}{1+\tau+\rho}\qquad \rho_1 = 0 \qquad\tau_2
  = \frac{1+\tau+\rho}{2} \qquad \rho_2 = \rho.
\end{equation}


For $\rho\geq 0$ it occurs with decomposition parameterized by
\begin{equation}
  \label{eq:ext2}
  \tau_1 = \frac{\tau(1+\rho+\tau)}{2(\tau + \rho)}\qquad \rho_1 =
  \frac{\rho(1+\rho+\tau)}{2(\tau + \rho)} \qquad\tau_2 =
  \frac{2(\tau+\rho)}{1+\tau+\rho} \qquad\rho_2 = 0.
\end{equation}

%

\subsection{Homogeneous transitions}
\label{sec:hom}
Throughout this section we confine attention to the special case
$\tau>0$, $X=Y=1$.  We specialize further to the case of a constant
one-step transition matrix, $P_i = P' =P(\tau',\rho')$ for all $i$.
We define $\sigma'$, $\gamma'$, $\delta'$ in terms of $\tau'$ and
$\rho'$ in parallel to \eqref{sigma}, \eqref{gamma} and \eqref{delta}.

In this case, by \eqref{taun} and \eqref{rhon}, we have
\begin{eqnarray}    \label{eq:tauhom} \tau  &=& (\tau')^n\\
  \label{eq:rhohom} \rho &=& \rho' \times \frac{1-(\tau')^n}{1-\tau'}  = \rho' \times \frac{1-{\tau}}{1-\tau'}.
\end{eqnarray}
In particular, we note that the relative sufficiency of $X$ for $Y$ is
preserved at each intermediate step:
$\sigma' = {\rho'}/(1-\tau') ={\rho}/(1-\tau) = \sigma$.  It follows
that $\gamma'=\gamma$.

We have
\begin{eqnarray}
  \label{eq:tau}
  \tau' &=& \tau^{1/n}\\
  \label{eq:rho}
  \rho' &=& \rho \times \frac{1-{\tau}^{1/n}}{1-\tau}.
\end{eqnarray}
Note that, for large $n$, $\tau'$ must be close to 1 and $\rho'$ close
to 0, with the same sign as $\rho$.


Using \eqref{tau} and \eqref{rho} in \eqref{simpleu} and
\eqref{prodbounds} yield the following bounds for a homogeneous
process:
\begin{eqnarray}
  \label{eq:pocxbounds}
  \ulb_n &=& \frac{2\tau}{1+\tau+\rho} \\
  \uub_n &=& \frac{\tau +  \left(1-|\rho'|\right)^n}{1+\tau+\rho}\\
  \label{eq:pcbounds2}
  \olb_n &=&  \tau\left(\frac {2}{1+\tau'+\rho'}\right)^n\\
  \label{eq:pcbounds3}
  \oub_n &=& 
             \left\{
             \begin{array}{ll}
               (\delta')^n& (\rho\geq 0)\\
               1 & (\rho < 0).
             \end{array}
                   \right.
    %
\end{eqnarray}
In particular, for the degenerate cases $|\rho| = 1-\tau$, so that
$|\rho'| = 1-\tau'$, we see, that for all $n$,
$\pc$ and $\tpc$ are both identified, at $\tau$ when $\rho=1-\tau$,
and at 1
when $\rho= \tau-1$---the existence of the mediators being irrelevant
in these cases.

\paragraph{Mixed evidence}
Here we assume the process is non-degenerate.

For the case with some negative evidence the lower bound, $\nlb_n$
say, is always 0, as noted in Section \secref{specobs}.  The upper
bound, however, depends on the particular pattern of positive and
negative evidence.  For any sequence $s$ of observations on
consecutive mediators (allowing $M_0 \equiv X$ and $M_n \equiv Y$,
both required to take value 1), denote the associated upper bound by
$\ub(s)$.  Let $\bs$ denote a full sequence of observations (\ie, on
all $n+1$ mediators).  We search for a full sequence $\bs_0$ yielding
the maximum value, $\nub_n$ say, of $\ub(\bs)$.

\begin{theorem}
  \label{thm:mixed}
  For large enough $n$, we have
  $$\nub_n = \left\{
    \begin{array}[c]{ll}
      \gamma^{n/2} & (n \mbox{ even})\\
      \gamma^{(n-1)/2}\,\delta' & (n \mbox{ odd})
    \end{array}
  \right.$$ The optimal sequence $\bs_0$ alternates $10101\ldots$,
  except, if $n$ is odd, for the final 2 symbols.
\end{theorem}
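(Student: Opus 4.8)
The plan is to reduce the search to a finite combinatorial optimisation over binary paths and then exploit the large-$n$ asymptotics of the one-step parameters. First, since $\bs$ is a \emph{full} sequence every mediator is observed, so \thmref{prodpc} applies with each gap a single step and gives
\[
  \ub(\bs) \;=\; \prod_{r=1}^{n} \sub_{m_{r-1}\,m_r},
\]
a product of the homogeneous one-step simple upper bounds. Reading these off from \eqref{s00++}--\eqref{s11+-}, exactly two of the four possible factors equal $1$, while the other two equal $\gamma'=\gamma$ and $\delta'$; for a non-degenerate process with $\rho\neq0$ we have $0<\gamma<\delta'<1$, the inequality $\gamma<\delta'$ holding because $\gamma'$ and $\delta'$ are the values at $1-\tau'$ and $1+\tau'$ of the increasing map $t\mapsto (t-|\rho'|)/(t+|\rho'|)$. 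For $\rho'\ge 0$ the factor $\gamma$ is attached to an \emph{ascent} $m_{r-1}m_r=01$ and $\delta'$ to a \emph{level-one} step $11$, the two unit factors being $00$ and the \emph{descent} $10$; for $\rho'<0$ the roles of $0$ and $1$ interchange. I would treat $\rho'\ge0$ in detail and invoke this symmetry for the other sign.

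Next I would recast $\ub(\bs)$ purely in terms of the shape of the path $m_0=1,\dots,m_n=1$. Write $d$ for the number of $\gamma$-steps (ascents, when $\rho'\ge0$); since a binary path returning to its starting level has equally many ascents and descents, $d\le\lfloor n/2\rfloor$ and each descent contributes a unit factor, while the remaining $n-2d$ steps are level steps contributing $1$ (at level $0$) or $\delta'$ (at level $1$). Hence
\[
  \ub(\bs) \;=\; \gamma^{\,d}\,\delta'^{\,f}, \qquad 0\le f\le n-2d,
\]
where $f$ counts the level-one steps, and the requirement that $\bs$ carry some negative evidence is simply $d\ge1$. Determining the extremal value $\nub_n$ (the smallest, most informative upper bound, driving the bound toward $0$) thus reduces to minimising $\gamma^{d}\delta'^{f}$ over the admissible pairs $(d,f)$ and exhibiting a path that realises the optimum.

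The crux is the trade-off governing this minimisation. Because $\gamma<\delta'$ each $\gamma$-step lowers the product more than a $\delta'$-step, but creating one extra ascent forces one extra (unit) descent, so replacing a block of two level-one steps by an ascent--descent pair multiplies the bound by $\gamma/\delta'^{2}$; raising $d$ to its maximum $\lfloor n/2\rfloor$ is therefore optimal precisely when $\gamma<\delta'^{2}$. Here the homogeneous scaling is decisive: by \eqref{tau}--\eqref{rho}, $\gamma=\gamma'$ is a \emph{fixed} constant in $(0,1)$ independent of $n$, whereas $\tau'=\tau^{1/n}\to1$ and $\rho'\to0$, so $\delta'\to1$ and hence $\delta'^{2}>\gamma$ for all large $n$---this is exactly the ``large enough $n$'' hypothesis. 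For such $n$ the optimum takes $d=\lfloor n/2\rfloor$ and then as many level-one steps as remain: when $n$ is even no level steps remain, giving $\nub_n=\gamma^{n/2}$, realised by the strict alternation $1010\cdots01$; when $n$ is odd a single level step remains, taken at level $1$ (factor $\delta'$) to push the product lowest, giving $\nub_n=\gamma^{(n-1)/2}\delta'$, realised by the alternation terminated by a repeated final symbol $\ldots 11$, which matches the stated $\bs_0$. The main thing to verify carefully is the threshold inequality $\delta_n'^{2}>\gamma$ together with the claim that no admissible $(d,f)$ beats the maximal-$d$ choice; both follow from the monotone comparison above and the limits $\delta'_n\to1$, $\gamma$ fixed.
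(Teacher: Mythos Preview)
Your argument is correct and reaches the same conclusion as the paper, but by a genuinely different route. The paper proceeds by local replacement: it first shows (from $(\delta')^2>\gamma$) that an optimal sequence can contain no run of length three, and then, treating the cases $\rho>0$ and $\rho<0$ separately, eliminates forbidden two-blocks by explicit swaps such as $001\to 011$ and $1101\to 1011$, so forcing the alternating pattern except possibly in the last two positions. Your approach instead globally parametrises $\ub(\bs)$ as $\gamma^{d}\delta'^{f}$, where $d$ is the common number of ascents and descents and $f$ the number of level-one steps (level-zero for $\rho'<0$), and then solves the two-variable discrete minimisation directly: since $\delta'<1$ one first pushes $f$ to its ceiling $n-2d$, and then the ratio $\gamma/\delta'^{2}<1$ drives $d$ to $\lfloor n/2\rfloor$. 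This is cleaner and makes the role of the threshold $\gamma<\delta'^{2}$ more transparent, while the paper's swap argument has the advantage of pinning down the optimal sequence $\bs_0$ without a separate realisability check. Two small points you should make explicit when writing it up: (i) every admissible pair $(d,f)$ with $1\le d\le\lfloor n/2\rfloor$, $0\le f\le n-2d$ is realised by some path from $1$ to $1$ (you implicitly use this when reading off the optimum), and (ii) for $\rho'<0$ the ``symmetry'' is in the parametrisation, not in the path itself---the endpoints are still $1$, so the single leftover step in the odd case is a level-zero step and the optimal sequence ends $\ldots 001$ rather than $\ldots 011$, though the value $\gamma^{(n-1)/2}\delta'$ is unchanged.
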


\begin{proof}
  See \appref{mixproofs}.
\end{proof}
  
For $\rho = 0$ the smallest possible upper bound is $1$ for all $n$.
Otherwise, $\nub_n\rightarrow 0$ as $n\rightarrow\infty$.  Then with
alternating evidence on many mediators the associated probability of
causation, $\hpc$ say, is effectively identified as $0$.

Figure~\ref{fig:withn} plots the intervals $[\ulb_n,\uub_n]$,
$[\olb_n,\oub_n]$ and $[\nlb_n,\nub_n]$ for a range of cases.  It
highlights how modest are the gains from repeated observation of
homogeneous mediators and how alternating evidence can tighten bounds
as long as $\rho\neq 0$.

\begin{figure}[p!]
  \centering
  \includegraphics[width=0.9\linewidth]{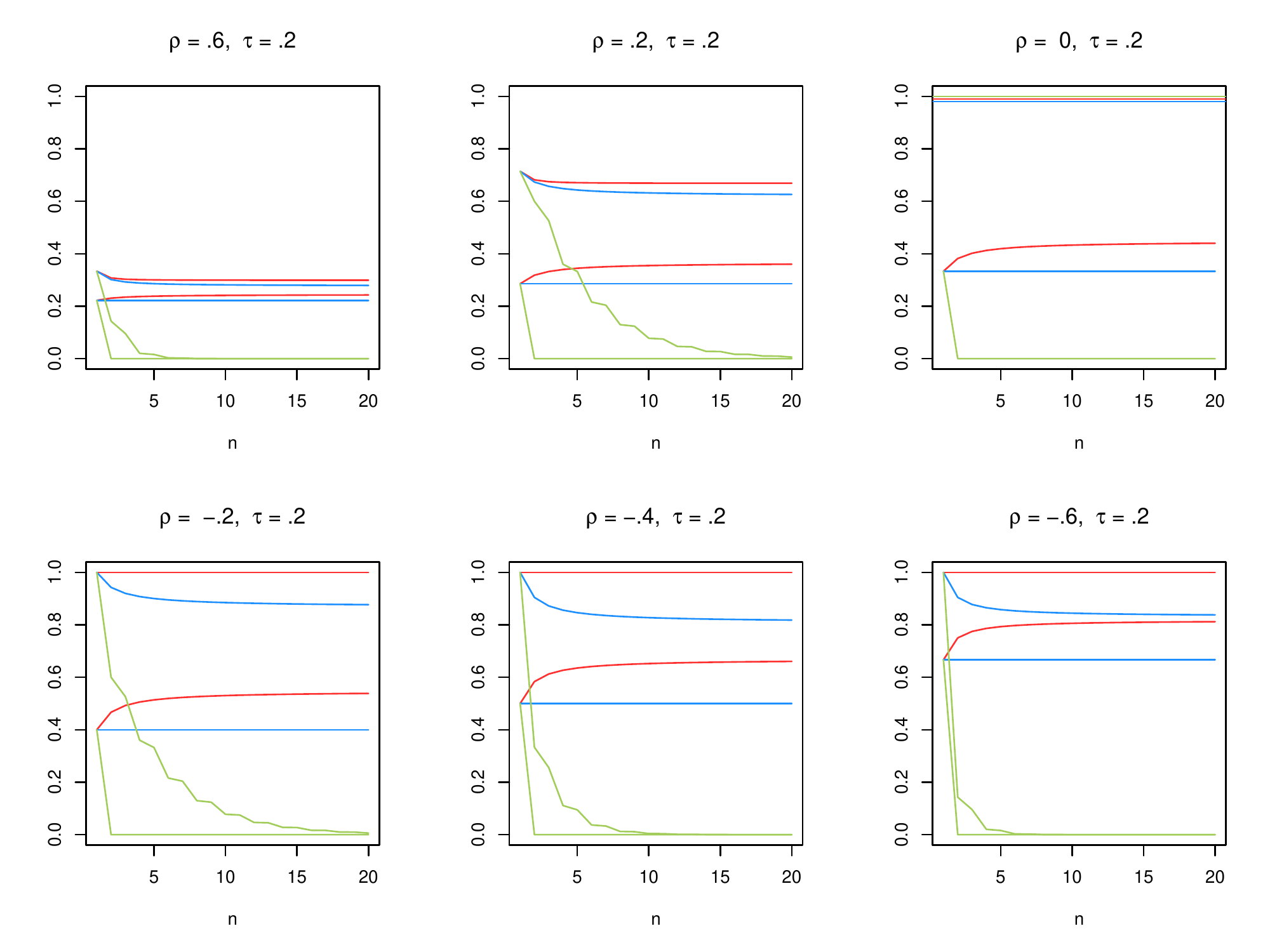}
  \caption{Bounds from homogeneous decompositions of length $n$ of
    $P=P(\tau,\rho)$, for $\tau = 0.2$ and six different values of
    $\rho$.  $\pc$ bounds when mediators not observed (marked blue),
    $\tpc$ bounds for mediators observed at 1 for homogeneous
    decompositions of length $n$ (red), and $\hpc$ bounds for
    alternating observations (green).  For positive data the bounds
    tighten only modestly as the number of links in the chain
    increases.  For alternating data the bounds converge to 0 unless
    $\rho = 0$.}
  \label{fig:withn}
\end{figure}

\subsubsection{Unboundedly many mediators}
\label{sec:unbound}

We now consider the behaviour of the bounds when we have a potentially
unlimited sequence of variables directly mediating between $X$ and
$Y$---still assuming identical one-step transition matrices.  Our
results are given in \thmref{lim}.

\begin{theorem}
  \label{thm:lim}
  \begin{eqnarray}
    \label{eq:lbinfu}
    {\ulb}_{\infty} := \lim_{n\rightarrow\infty}\ulb_n &=& \frac{2\tau}{1+\tau+\rho}\\
    {\uub}_\infty := \lim_{n\rightarrow\infty}\uub_n  &=&\frac {\tau + \tau^{|\sigma|}}{ 1 + \tau + \rho } \\
    \label{eq:lbinf}
    {\olb}_{\infty} := \lim_{n\rightarrow\infty}\olb_n  &=& \tau^{\half(1+\sigma)}\\
    \label{eq:obinf}
    {\oub}_\infty  := \lim_{n\rightarrow\infty}\oub_n   &=&\min\left\{1,\tau^{\sigma}\right\}\\
    {\nlb}_{\infty} := \lim_{n\rightarrow\infty}\nlb_n &=& 0\\
    {\nub}_{\infty} := \lim_{n\rightarrow\infty}\nub_n  &=& \left\{
                                                            \begin{array}[c]{cc}
                                                              0 & \text{if } \rho \neq 0\\ 
                                                              1 & \text{if } \rho = 0
                                                            \end{array} \right.   
  \end{eqnarray}
\end{theorem}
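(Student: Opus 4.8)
The plan is to reduce all six limits to a single asymptotic expansion. Two of them are immediate: by \eqref{pocxbounds} the quantity $\ulb_n$ does not depend on $n$, so ${\ulb}_\infty = 2\tau/(1+\tau+\rho)$, and by \thmref{mixed} we have $\nlb_n \equiv 0$, so ${\nlb}_\infty = 0$. For the remaining four I would observe that each of $\uub_n$, $\olb_n$, $\oub_n$, $\nub_n$ is a constant multiple of an expression of the form $a_n^{\,n}$ with $a_n\to 1$, so all the work lies in controlling the base $a_n$.

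The central computation is the behaviour of the one-step parameters as $n\to\infty$. Writing $c := -\ln\tau > 0$ (finite and positive since the process is non-degenerate, so $\tau\in(0,1)$), I would expand $\tau' = \tau^{1/n} = e^{-c/n} = 1 - c/n + O(n^{-2})$, whence $u := 1-\tau^{1/n} = c/n + O(n^{-2})$. Then \eqref{rho} and \eqref{sigma} give $\rho' = \sigma\,u$ and $|\rho'| = |\sigma|\,u$, with $\sigma=\rho/(1-\tau)$. Everything then follows from the elementary limit $(1 + x_n)^{n}\to e^{\lim n x_n}$ whenever $x_n\to 0$ and $n x_n$ converges.

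Applying this termwise is routine. For $\uub_n$ the base is $1-|\rho'|$, and $n\log(1-|\rho'|) = -n|\sigma|u + o(1) \to |\sigma|\ln\tau$, so $(1-|\rho'|)^n\to\tau^{|\sigma|}$ and ${\uub}_\infty = (\tau+\tau^{|\sigma|})/(1+\tau+\rho)$. For $\olb_n$ the base is $2/(1+\tau'+\rho') = 1/\{1-\tfrac{u}{2}(1-\sigma)\}$, giving limiting exponent $\tfrac{c}{2}(1-\sigma)$, hence ${\olb}_\infty = \tau\cdot\tau^{-(1-\sigma)/2} = \tau^{(1+\sigma)/2}$. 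For $\oub_n$ I would split on the sign of $\rho$: when $\rho<0$ it is identically $1$ by \eqref{pcbounds3}, and since then $\sigma<0$ gives $\tau^\sigma>1$, this matches $\min\{1,\tau^\sigma\}$; when $\rho\ge 0$ the base is $\delta' = 1-2\rho'/(1+\tau'+\rho')$, yielding $(\delta')^n\to\tau^{\sigma}=\min\{1,\tau^\sigma\}$ (as $\sigma\ge 0$). Finally, for $\nub_n$ I would feed the homogeneous parameters into \thmref{mixed}, recalling from the text before \eqref{tau} that $\sigma'=\sigma$, hence $\gamma'=\gamma$, are independent of $n$: if $\rho\neq0$ then $|\sigma|>0$ forces $\gamma<1$, so both $\gamma^{n/2}$ and $\gamma^{(n-1)/2}\delta'$ tend to $0$ (using $\delta'\to 1$); if $\rho=0$ then $\rho'=0$ exactly, giving $\gamma=\delta'=1$ and $\nub_n\equiv1$.

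The only real care needed---and the step I would flag as the main obstacle---is the sign bookkeeping: the expansions must track $\sigma$ (signed) in $\olb$ and $\oub$ but $|\sigma|$ (unsigned) in $\uub$, and the $\oub$ computation genuinely branches on the sign of $\rho$. A secondary point requiring a line of justification is that the $O(n^{-2})$ error in $u$ contributes nothing in the limit, i.e.\ that $n\cdot o(1/n)\to 0$, which legitimises passing to $\exp(\lim n x_n)$ in each case.
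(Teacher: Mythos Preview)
Your argument is correct and is in fact more transparent than the paper's own proof. The paper (Appendix~A.3) simply reports that {\tt Mathematica} delivers the expansions
\[
\olb_n = \olb_\infty\{1+k/n+O(1/n^2)\},\qquad \oub_n = \oub_\infty\{1+q/n^2+O(1/n^3)\},
\]
reads off the limits, says $\uub_\infty$ is ``obtained similarly'', and handles $\ulb_\infty$ by noting that $\ulb_n$ is constant; the limits $\nlb_\infty$ and $\nub_\infty$ are not treated explicitly there at all. Your route---the single expansion $\tau^{1/n}=1-c/n+O(n^{-2})$ with $c=-\ln\tau$, together with $\rho'=\sigma(1-\tau^{1/n})$ and the elementary limit $(1+x_n)^n\to e^{\lim n x_n}$---does the same job by hand, makes the appearance of $\sigma$ and $|\sigma|$ in the exponents completely explicit, and covers all six limits uniformly. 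This buys self-containment (no computer algebra), while the paper's expansion buys the next-order coefficients $k$ and $q$, which it needs later for \lemref{big}.

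One minor citation slip: $\nlb_n\equiv 0$ is not a consequence of \thmref{mixed} (which concerns $\nub_n$); it comes from the observation in \secref{specobs} that the lower bound vanishes whenever any observed mediator differs from its predecessor. Also note that for the case $\rho=0$ the hypothesis of \thmref{mixed} (namely $\gamma<(\delta')^2$ for large $n$) is never met, so strictly speaking you cannot ``feed the parameters into'' that theorem; but your parenthetical direct argument---$\rho'=0$ forces every one-step upper bound to be $1$, hence $\nub_n=1$---already handles it without appeal to the theorem.
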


\begin{proof}
  See \appref{homproofs}.
\end{proof}


In particular, for $\rho=0$ we have
$$0= \nlb_{\infty} < \ulb_{\infty} = 2\tau/(1+\tau)\leq \olb_\infty = {\tau}^\half,$$
and
$$\uub_\infty = \oub_\infty = \nub_\infty =
1.$$

\begin{prop}
  \label{prop:inc}
  For $|\rho|< 1-\tau$, $\olb_n$ is a concave strictly increasing
  function of $n$, and $\uub_n$ and (for $\rho>0$) $\oub_n$ are both
  convex strictly decreasing functions of $n$.
\end{prop}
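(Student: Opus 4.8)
The plan is to treat $n$ as a continuous real variable on $[1,\infty)$ and reduce all three bounds to one scalar family. Using the invariance of relative sufficiency recorded after \eqref{rhohom}, $\sigma'=\sigma=\rho/(1-\tau)$, I set $\tau'=\tau^{1/n}$ and $\rho'=\sigma(1-\tau')$, so that $1-|\rho'|=(1-|\sigma|)+|\sigma|\,\tau^{1/n}$. Writing $H_\theta(n)\defeq\left((1-\theta)+\theta\,\tau^{1/n}\right)^n$ for $\theta\in[0,1]$, the expressions \eqref{pcbounds2}, \eqref{pcbounds3} and the formula for $\uub_n$ become
\[
  \uub_n=\frac{\tau+H_{|\sigma|}(n)}{1+\tau+\rho},\qquad
  \olb_n=\frac{\tau}{H_{(1-\sigma)/2}(n)},\qquad
  \oub_n=\frac{H_{(1+\sigma)/2}(n)}{H_{(1-\sigma)/2}(n)}\quad(\rho\ge0),
\]
so the whole proposition is governed by the behaviour of $H_\theta$ in $n$.

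The engine is the identity $\log H_\theta(n)=n\,\psi_\theta(1/n)$, where $\psi_\theta(t)=\log\left((1-\theta)+\theta e^{t\log\tau}\right)$ is the cumulant generating function of the two-point variable equal to $\log\tau$ with probability $\theta$ and to $0$ with probability $1-\theta$. Thus $\psi_\theta$ is convex with $\psi_\theta(0)=0$, and strictly convex exactly when $0<\theta<1$ and $\tau<1$, conditions guaranteed here because $|\rho|<1-\tau$ forces $|\sigma|<1$. Two elementary facts then do most of the work. First, the chord slope $\psi_\theta(t)/t$ of a convex function vanishing at the origin is nondecreasing in $t$; since $t=1/n$ decreases in $n$, $\log H_\theta$ decreases, so $H_\theta$ is strictly decreasing in $n$. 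Second, differentiating $g_\theta(n):=n\,\psi_\theta(1/n)$ twice yields the clean identity $g_\theta''(n)=n^{-3}\psi_\theta''(1/n)\ge0$, so each $H_\theta$ is log-convex, hence convex (being the exponential of a convex function).

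These two facts settle $\uub_n$ at once: it is a strictly increasing affine image of $H_{|\sigma|}(n)$ -- the denominator $1+\tau+\rho>2\tau>0$ is a positive constant -- hence convex and strictly decreasing when $\rho\neq0$ (for $\rho=0$ it is the constant $1$). For $\olb_n$ and $\oub_n$ log-convexity is not enough, since $\olb_n$ is a reciprocal and $\oub_n$ a ratio of such functions. Here I would pass to the true second derivative: writing $\olb_n=\tau e^{-g(n)}$ with $g=g_{(1-\sigma)/2}$ convex and decreasing, one computes $\olb_n''=\tau e^{-g}\big((g')^2-g''\big)$, so concavity is equivalent to the scalar inequality $g''(n)\ge g'(n)^2$. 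Substituting $\psi_\theta'(t)=\log(1/\tau)\,w_t$ and $\psi_\theta''(t)=\log(1/\tau)^2\,w_t(1-w_t)$, where $w_t$ is the tilted probability, turns this into an inequality in $t=1/n$ to be verified on $(0,1]$; the convexity of $\oub_n$ for $\rho>0$ reduces, through $\log\oub_n=g_{(1+\sigma)/2}-g_{(1-\sigma)/2}$, to an inequality of the same type.

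The main obstacle is the concavity of $\olb_n$. Monotonicity and log-convexity fall out softly from the cumulant-generating-function structure, and log-convexity alone delivers $\uub_n$; but $\olb_n$ is only log-concave for free, and genuine concavity is the strictly stronger demand $g''\ge(g')^2$. I expect this to be the one step needing real effort, since that inequality is not a formal consequence of convexity of $\psi_\theta$ and must be controlled uniformly over the whole range $t\in(0,1]$, not merely in the large-$n$ limit where it is easy; establishing it should require the explicit two-point form of $w_t$ together with the constraint $|\sigma|<1$. The convexity of $\oub_n$ is of the same character, but benefits from the cancellation in the difference $g_{(1+\sigma)/2}-g_{(1-\sigma)/2}$.
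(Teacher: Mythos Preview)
Your approach is genuinely different from the paper's, and you should know at the outset that the paper itself does \emph{not} prove this proposition: immediately after stating it the authors write ``We do not have a full proof'', and offer only two pieces of supporting evidence --- large-$n$ asymptotic expansions computed in \texttt{Mathematica} (their Lemma on eventual monotonicity/convexity) and a doubling inequality $\olb_{2n}>\olb_n$, $\uub_{2n}<\uub_n$, $\oub_{2n}<\oub_n$. So the target here is not a known proof but an open claim.

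Against that benchmark your cumulant-generating-function reduction already proves strictly more than the paper. The identification $\log H_\theta(n)=n\psi_\theta(1/n)$ with $\psi_\theta$ the CGF of a two-point law, together with $g_\theta''(n)=n^{-3}\psi_\theta''(1/n)\ge 0$, gives rigorous, global (not merely asymptotic) proofs of the strict monotonicity of $\uub_n$ and $\olb_n$ and of the convexity of $\uub_n$. That is a real advance over the paper's evidence.

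You are, however, too pessimistic about $\oub_n$. The special feature you have not exploited is that the two parameters $\theta_1=(1+\sigma)/2$ and $\theta_2=(1-\sigma)/2$ satisfy $\theta_1+\theta_2=1$, hence $\theta_1(1-\theta_1)=\theta_2(1-\theta_2)$. Writing $a=\tau^{t}$ one has
\[
\psi_\theta''(t)=(\log\tau)^2\,\frac{\theta(1-\theta)\,a}{\big((1-\theta)+\theta a\big)^2},
\]
so $D(t):=\psi_{\theta_1}(t)-\psi_{\theta_2}(t)$ has
\[
D''(t)=(\log\tau)^2\,\theta_1\theta_2\,a\left[\frac{1}{(\theta_2+\theta_1 a)^2}-\frac{1}{(\theta_1+\theta_2 a)^2}\right]>0
\]
for $\sigma>0$ and $a<1$. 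Thus $D$ is strictly convex with $D(0)=0$, whence $\log\oub_n=nD(1/n)$ is strictly convex (by your identity) and strictly decreasing (chord-slope argument); log-convexity then gives convexity of $\oub_n$ itself. So your framework actually closes the $\oub_n$ case completely.

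The one genuine gap you identify --- concavity of $\olb_n$, equivalently $t^{3}\psi_\theta''(t)\ge(\psi_\theta(t)-t\psi_\theta'(t))^{2}$ for $t\in(0,1]$ with $\theta=(1-\sigma)/2$ --- is exactly the piece the paper also fails to supply. Your diagnosis that this is not a formal consequence of convexity of $\psi_\theta$ is correct; the asymptotic argument (your small-$t$ expansion, or the paper's \texttt{Mathematica} series) handles only large $n$. If you can verify this scalar inequality on all of $(0,1]$ you will have completed a proof the authors lack.

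One minor point: the proposition as stated asserts strict decrease of $\uub_n$ for all $|\rho|<1-\tau$, but as you observe $\uub_n\equiv 1$ when $\rho=0$; this is a defect of the statement, not of your argument.
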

We do not have a full proof of \propref{inc}.  Supporting evidence is
given by numerous plots of $\olb_n$ and $\oub_n$ against $n$ for
various $(\tau,\rho)$ pairs, and the following two results, which are
proved in \appref{homproofs}.

\begin{lemma}
  \label{lem:big}
  If $|\rho|< 1-\tau$, then $\olb_n$ is a concave increasing function
  of $n$, and $\uub_n$ and (for $\rho>0$) $\oub_n$ are convex strictly
  decreasing functions of $n$, for $n$ sufficiently large.
\end{lemma}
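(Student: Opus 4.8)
The plan is to extend each bound to a smooth function of a real variable $n>0$ and to reduce all the assertions to the sign of a single derivative. By \eqref{sigma} and \eqref{rho} one has $\rho'=\sigma(1-\tau')$ with $\tau'=\tau^{1/n}$, so each of the three relevant quantities has the form $w(n)=\bigl(F(\tau^{1/n})\bigr)^{n}$ for a rational $F$ with $F(1)=1$. From \eqref{pcbounds2}, \eqref{pcbounds3} and the displayed formula for $\uub_n$ the three choices are
\[ F_{1}(\theta)=\frac{2\theta}{(1+\sigma)+(1-\sigma)\theta},\qquad F_{2}(\theta)=\frac{(1-\sigma)+(1+\sigma)\theta}{(1+\sigma)+(1-\sigma)\theta},\qquad F_{3}(\theta)=1-|\sigma|(1-\theta), \]
for $\olb_{n}$, $\oub_{n}$ (taking $\rho>0$; for $\rho<0$, $\oub_{n}\equiv1$) and $\uub_{n}$ respectively. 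Since $\uub_{n}$ is an increasing affine function of $\bigl(F_{3}(\tau^{1/n})\bigr)^{n}$, its monotonicity and convexity are inherited from that power; the degenerate case $\rho=0$, where $F_{2}$ and $F_{3}$ are constant, is excluded from the strict statements.

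I then set $x=1/n$, $a=\log\tau<0$ and $G(x)=\log F(e^{ax})$, so that $L:=\log w$ satisfies $L(n)=nG(1/n)$ and $G(0)=\log F(1)=0$. Differentiating gives the exact identities
\[ L'(n)=G(x)-x\,G'(x),\qquad L''(n)=x^{3}G''(x),\qquad x=1/n. \]
Because $w=e^{L}$, the sign of $w'$ is that of $L'$ and the sign of $w''$ is that of $L''+(L')^{2}$. Taylor-expanding $G$ about $0$ and using $G(0)=0$ gives, as $n\to\infty$, $L'(n)\sim-\tfrac12 G''(0)x^{2}$ and $L''(n)+L'(n)^{2}\sim G''(0)x^{3}$ whenever $G''(0)\neq0$. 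Thus $G''(0)<0$ makes $w$ increasing and concave for large $n$, whereas $G''(0)>0$ makes it decreasing and convex; and if $G''(0)=0$ one drops one order, where $L'(n)\sim-\tfrac13 G'''(0)x^{3}$ and $L''+(L')^{2}\sim G'''(0)x^{4}$, so $G'''(0)>0$ again gives $w$ decreasing and convex.

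It remains to compute $G''(0)$, and $G'''(0)$ where it vanishes. With $\ell=\log F$ and $\theta=e^{ax}$ (so $\theta^{(k)}(0)=a^{k}$) the chain rule yields
\[ G''(0)=a^{2}\bigl[\ell''(1)+\ell'(1)\bigr],\qquad G'''(0)=a^{3}\bigl[\ell'''(1)+3\ell''(1)+\ell'(1)\bigr]. \]
For $\olb_{n}$ a short computation gives $G''(0)=a^{2}(\sigma^{2}-1)/4<0$ because $|\sigma|<1$, so $\olb_{n}$ is increasing and concave. For $\uub_{n}$, $F_{3}$ is affine and $G''(0)=a^{2}|\sigma|(1-|\sigma|)>0$ when $\rho\neq0$, so $\uub_{n}$ is decreasing and convex.

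The main obstacle is $\oub_{n}$. Here $\ell'(1)=\sigma$ and $\ell''(1)=-\sigma$, so $G''(0)=a^{2}(\ell''(1)+\ell'(1))=0$ \emph{identically}: the second-order test is inconclusive, forcing the genuinely third-order calculation. Evaluating $\ell'''(1)=\sigma(3+\sigma^{2})/2$ gives $G'''(0)=a^{3}\sigma(\sigma^{2}-1)/2$, which is positive since $a^{3}<0$, $\sigma>0$ and $\sigma^{2}-1<0$; hence $\oub_{n}$ is decreasing and convex for large $n$. This second-order degeneracy is exactly why the conclusion is obtained only for $n$ sufficiently large, and why the all-$n$ strengthening asserted in \propref{inc} is left without a full proof.
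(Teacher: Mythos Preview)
Your argument is correct and follows the same underlying strategy as the paper's proof: an asymptotic expansion of each bound in powers of $1/n$, with the eventual monotonicity and convexity read off from the sign of the first nonvanishing correction term. The paper simply reports the expansions $\olb_n=\olb_\infty\{1+k/n+O(1/n^2)\}$ and $\oub_n=\oub_\infty\{1+q/n^2+O(1/n^3)\}$ as {\tt Mathematica} output, with $k<0$ and $q>0$, and then takes first differences in~$n$; your calculations recover exactly these coefficients ($k=\tfrac12 G''(0)$ for $\olb$, $q=\tfrac16 G'''(0)$ for $\oub$) by hand via the substitution $L(n)=nG(1/n)$ and the chain rule. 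In particular, your observation that $G''(0)=0$ identically for $F_2$, forcing a third-order analysis, is precisely what the paper encodes by having the $\oub_n$ correction enter at order $1/n^2$ rather than $1/n$. The difference is one of presentation: the paper's proof is shorter but opaque (it defers to computer algebra), while yours is self-contained and makes clear why the $\oub_n$ case is genuinely harder---a point the paper mentions only in passing (``the convergence of $\oub_n$ to its limit is at a faster rate than for $\olb_n$'') without explaining its source.
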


\begin{lemma}
  \label{lem:double} For the non-degenerate case $|\rho|< 1-\tau$,
  $\uub_{2n} < \uub_n$, $\olb_{2n} > \olb_n$, and (for $\rho>0$)
  $\oub_{2n} < \uub_n$.
\end{lemma}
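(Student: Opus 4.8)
The plan is to prove all three doubling inequalities by a common device: compare a homogeneous chain of length $n$ with one of length $2n$ for the \emph{same} fixed $P(\tau,\rho)$, and exploit the relation between their one-step parameters. By \eqref{tau}--\eqref{rho}, a length-$m$ homogeneous chain has one-step parameters $\tau'_m = \tau^{1/m}$ and $\rho'_m = \sigma\,(1-\tau^{1/m})$. Introduce $v \defeq \tau^{1/(2n)}$, which lies in $(0,1)$ since $0<\tau<1$ (here $\tau>0$ and $|\rho|<1-\tau$ force $\tau<1$). The point of doubling is that the length-$2n$ chain evaluates its one-step quantities at $\tau^{1/(2n)}=v$, while the length-$n$ chain evaluates them at $\tau^{1/n}=v^2$. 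Since the global denominator $1+\tau+\rho$ in $\uub_m$ and the global prefactor $\tau$ in $\olb_m$ are independent of $m$, and the $m$-dependence in each bound sits entirely in an $m$-th power of a single one-step factor, taking positive $n$-th roots reduces every doubling comparison to an inequality between the \emph{square} of a length-$2n$ factor and the corresponding length-$n$ factor---a one-variable inequality in $v$. This is the step that avoids differentiating the awkward exponents $\tau^{1/m}$ in $m$.

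For $\uub$ and $\olb$ the reduction is elementary. Writing $a\defeq 1-|\sigma|$, $c\defeq|\sigma|$ (so $a+c=1$), the relevant factor is $1-|\rho'_m| = a + c\,\tau^{1/m}$, and $\uub_{2n}<\uub_n$ reduces (equal denominators, subtract the common $\tau$ in the numerators, take positive $n$-th roots) to $(a+cv)^2 < a+cv^2$; expanding and using $a+c=1$ this rearranges to $ac(1-v)^2>0$. For $\olb$, factoring out $\tau$ and writing $p\defeq 1+\sigma$, $q\defeq 1-\sigma$ (so $p+q=2$), one has $1+\tau'_m+\rho'_m = p + q\,\tau^{1/m}$, and $\olb_{2n}>\olb_n$ reduces to $2(p+qv^2) > (p+qv)^2$, which simplifies to $pq(1-v)^2>0$. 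Both are strict for $0<\tau<1$: the $\olb$ inequality holds throughout $|\rho|<1-\tau$ (since $p,q>0$), while the $\uub$ inequality is strict precisely when $c=|\sigma|>0$, i.e. when $\rho\neq 0$ (for $\rho=0$ one has $\uub_m\equiv 1$ and equality).

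The $\oub$ case (for $\rho>0$, where $\oub_m=(\delta'_m)^m$) is where the work lies. Here $\delta'_m = \frac{q + p\,\tau^{1/m}}{p + q\,\tau^{1/m}}$, so the reduction gives $(\delta'_{2n})^2 < \delta'_n$, i.e. the quartic inequality
\[
  (q+pv^2)(p+qv)^2 - (q+pv)^2(p+qv^2) > 0 .
\]
The main obstacle is to see that the left-hand side factors cleanly: on expanding, the $v^2$-coefficients cancel and every surviving coefficient is $\pm\,pq(p-q)$, leaving $pq(p-q)\,(1 - 2v + 2v^3 - v^4)$, and the quartic in parentheses factors as $(1-v)^3(1+v)$. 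Since $\rho>0$ gives $p>q>0$ and $0<v<1$, each factor is positive and the inequality is strict. This completes the three comparisons; the accompanying facts used in \propref{inc} (that $\ulb_n$ is constant and the bounds are monotone in $n$) then follow at once. I would also flag that the displayed statement ``$\oub_{2n}<\uub_n$'' should read $\oub_{2n}<\oub_n$, which is what the doubling argument and \propref{inc} require.
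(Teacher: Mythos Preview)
Your argument is correct, and your flag that the displayed inequality should read $\oub_{2n}<\oub_n$ (not $\uub_n$) is right: both the paper's proof and \propref{inc} concern $\oub_n$ being decreasing. Your observation that the $\uub$ inequality degenerates to equality when $\rho=0$ is also accurate and slightly sharper than the paper's treatment.

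The route, however, differs from the paper's. For $\uub$ and $\olb$ the paper does not compute at all: it views the $2n$-step homogeneous chain as the $n$-step chain with each factor $P_i$ replaced by its own homogeneous two-term decomposition, and then invokes \corref{ubusmall} (collapsing two mediators can only raise the upper bound on $\xi$, hence on $\uub$) and the monotonicity of $\olb$ in the number of observed mediators established in \secref{specobs}. Only for $\oub$ does the paper do algebra, and there it uses the relations $\tau=\tau'^2$, $\rho=\rho'(1+\tau')$ to reduce the two-term inequality $(\delta')^2<\delta$ to $\rho'^2<(1-\tau')^2$, i.e.\ $|\sigma|<1$. Your approach is a uniform direct computation: parametrize by $v=\tau^{1/(2n)}$, take $n$-th roots, and reduce each comparison to a one-variable polynomial inequality, with the $\oub$ case handled by the neat factorisation $pq(p-q)(1-v)^3(1+v)$. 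The paper's proof is shorter and more conceptual for $\uub$ and $\olb$ because it recycles earlier structural results; yours is self-contained and exposes exactly where strictness comes from (e.g.\ the factor $ac$ vanishing when $\rho=0$). For $\oub$ the two algebraic reductions are equivalent in content but yours avoids the intermediate step of rewriting in terms of $\tau,\rho$.

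One small overclaim: your final sentence suggests that monotonicity of the bounds in $n$ ``follows at once'' from the doubling inequalities. It does not---doubling only compares $n$ with $2n$, not $n$ with $n+1$; the paper treats monotonicity separately via the asymptotic expansions in \lemref{big}, and indeed leaves full monotonicity (\propref{inc}) unproved.
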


\subsection{Implications for data gathering}
\label{sec:strat}

Our results have focused on improving the bounds on $\pc$ by learning
about general mediating processes together with values for
prespecified mediators for the case at hand.  Our results can also be
used to suggest which mediators researchers might most fruitfully seek
to observe for the case at hand.

Thus consider a homogeneous process with $n$ steps ($n$ even) and
suppose that researchers can observe the value of just one mediator
$M_i$.  In this case we can show that the lower bound $\widetilde\lb$
on $\tpc$, if we were to observe $M_i=1$, is maximized if the
{central} mediator in the sequence is observed.  To see this, note
that from \eqref{taun}, \eqref{rhon} and \thmref{prodpc}, the lower
bound $\widetilde\lb$ from observation of mediator $M_k=1$ is given by
the product of the lower bound for the probability that $X=1$ caused
$M_k=1$ and the lower bound for the probability that $M_k=1$ caused
$Y=1$:
$$\frac{{2}(\tau')^k}{1+(\tau')^k+\rho'\{1+\tau'  + \dots +(\tau')^k\}}\times
\frac{{2}(\tau')^{n-k}}{1+(\tau')^{n-k}+\rho'\{1+\tau' + \dots
  +(\tau')^{n-k}\}}$$
where $\tau'$ and $\rho'$ are given by \eqref{tau} and \eqref{rho}.
This expression has the form ${c}/{f(k)f(n-k)}$, where $f(k)$ is
decreasing and convex in $k$: this holds since
$\Delta_{k+1} := f(k+1) - f(k) =
\tau'^{k+1}-\tau'^k+\rho'\tau'^{k+1}=\tau'^k(\tau'+\rho'-1)<0$, and
$\Delta_{k+1} - \Delta_{k} = (\tau'^k-\tau'^{k-1})(\tau'+\rho'-1)>0$.
Hence the denominator is minimised, and so $\widetilde\lb$ is
maximised, when $k=n-k$.

As an illustration, suppose 121 dominoes stand in a row.  The fall of
any domino increases the chance that its neighbor will fall from 0.005
to 0.995.  You know that the first domino was knocked and fell, that
the last is also down, and want the probability that the fall of the
first one caused the fall of the last one.  A lower bound above 50\%
would secure a conviction of domino~1.

With no further information, the lower bound is 0.461---not enough to
convict.  But now suppose you can seek information on the status of
just one other domino in the sequence: which should you choose?  It is
better to choose in the middle than at the edges.

If for example you were to seek information on the status of domino 2
and found that it had fallen, you would find
$\widetilde\lb = 0.463$---a modest gain, reflecting the fact that you
fully expected domino 2 to have fallen, given that domino 1 was
knocked.  However, you are less sure you will find domino 61 down.  If
you do, you find $\widetilde\lb = 0.501$--- enough to convict
domino~1.

Note that in all cases the lower bound would be 0 if the intermediate
domino were found to be standing.  Taking both possible outcomes into
account, the {expected} lower bound is always $0.461$.  But the second
strategy does better than the first, in allowing the possibility to
obtain a larger lower bound (albeit with a smaller probability), and
so secure a conviction.


\section{Comparisons with other bounds}
\label{sec:comp}

Although knowledge of mediators can narrow bounds, we have seen that
this narrowing can be modest, even with access to an infinite sequence
of positive evidence along a causal path.  To put our results in
context, we compare them with bounds that can be achieved from
monotonicity, and from covariate information.  Knowledge of the bounds
achievable by different strategies provides some guidance as to
whether a strategy would be worth pursuing.

\paragraph{Monotonicity}
Suppose that we somehow knew that there are no cases for which the
exposure would prevent the outcome, \ie, such that $Y_0 = 1, Y_1=0$.
From \tabref{y0y1} this is equivalent to $\xi=\tau$, its lower limit,
which in turn implies that $\pc$, given by \eqref{now}, is identified
at its lower limit, $\slb = (2\tau)/(1+\tau+\rho).$

However, since monotonicity is an attribute of the typically
unidentifiable joint distribution of $(Y_0,Y_1)$, it is not easy to
justify without additional knowledge.  One case where this works is
when we know the existence of a mediation process with decomposition
\eqref{ba1}.
  

\paragraph{Observed covariate}

Suppose that, in addition to $X$ and $Y$, we can observe a binary
covariate $C$, which can affect the dependence of $Y$ on $X$.  Let
$\pi= \Pr(C=1)$, and let $P_i$ be the transition matrix from $X$ to
$Y$, conditional on $C=i$; for consistency with the known
$P = P(\tau,\rho)$ we must have $P = \pi P_1 + (1-\pi)P_0$.

In particular, it could be the case that $\pi = (1+\tau-\rho)/2$, and
$$P_1  = \left( 
  \begin{array}{cc} 
    1 & 0 \\ 
    \frac{1-\tau-\rho}{2} & \frac{1+\tau+\rho}{2}
  \end{array}
\right) \qquad P_0 = \left(
  \begin{array}{cc}
    0 & 1\\
    \frac{1-\tau-\rho}{2} & \frac{1+\tau+\rho}{2}
  \end{array}\right).$$ 
In this case knowledge that an individual with $X=Y=1$ also has $C=1$
is enough to identify \pc at 1.

\paragraph{Unobserved covariate}


As shown in \citet{dawid2011role}, knowledge of covariates can improve
bounds, even if their values are not observed for the case at hand.
In particular, this can let us identify \pc at the upper bound,
$\sub=\min\{1, \frac{1+\tau - \rho}{1+\tau +\rho}\}$.  For this to be
possible, however, the average treatment effect must be negative for
some value of $C$.

Thus suppose $\pi = \frac{1+\tau+\rho}{2}$, and the conditional
transition matrices are:

For $\rho<0$,
$$P_{1}  = \left( 
  \begin{array}{cc} 
    1 & 0 \\
    0 & 1
  \end{array}
\right) \qquad P_0 = \left(
  \begin{array}{cc}
    \frac{-2\rho}{1-\tau-\rho} & \frac{1+\tau-\rho}{1-\tau-\rho} \\
    1 & 0 \end{array}
\right).$$
  
For $\rho\geq0$,
$$P_{1}  = 
\left(
  \begin{array}{cc}
    \frac{1 +\tau-\rho}{1 +\tau+\rho} & \frac{2\rho}{1+\tau+\rho} \\
    0 & 1
  \end{array}
\right) \qquad P_0 = \left(
  \begin{array}{cc} 0 & 1 \\
    1 & 0
  \end{array}
\right).$$

In either case, knowledge that $X=Y=1$ is sufficient to infer that
$C=1$.  This identifies the probability of causation: $\pc=1$ for
$\rho<0$, $\pc = \frac{1 +\tau-\rho}{1+\tau+\rho}$ for $\rho\geq 0$.
In both cases we hit the upper bound.

\paragraph{Comparisons}
Figure~\ref{fig:bounds} compares the bounds obtained, for a range of
values of $\tau$ and $\rho$.  It illustrates how, in general, lower
bounds rise with $\tau$ and fall with $\rho$.  For homogeneous
processes the lower bounds improve on the simple bounds, although the
gain from unlimited steps is not a striking improvement on that for
just two steps.  The best gains from non-homogeneous decompositions
are substantial, as are the gains from knowledge of covariates,
especially when $\rho$ is small.

\begin{figure}[htbp]
  \centering
  \includegraphics[width=0.8\linewidth]{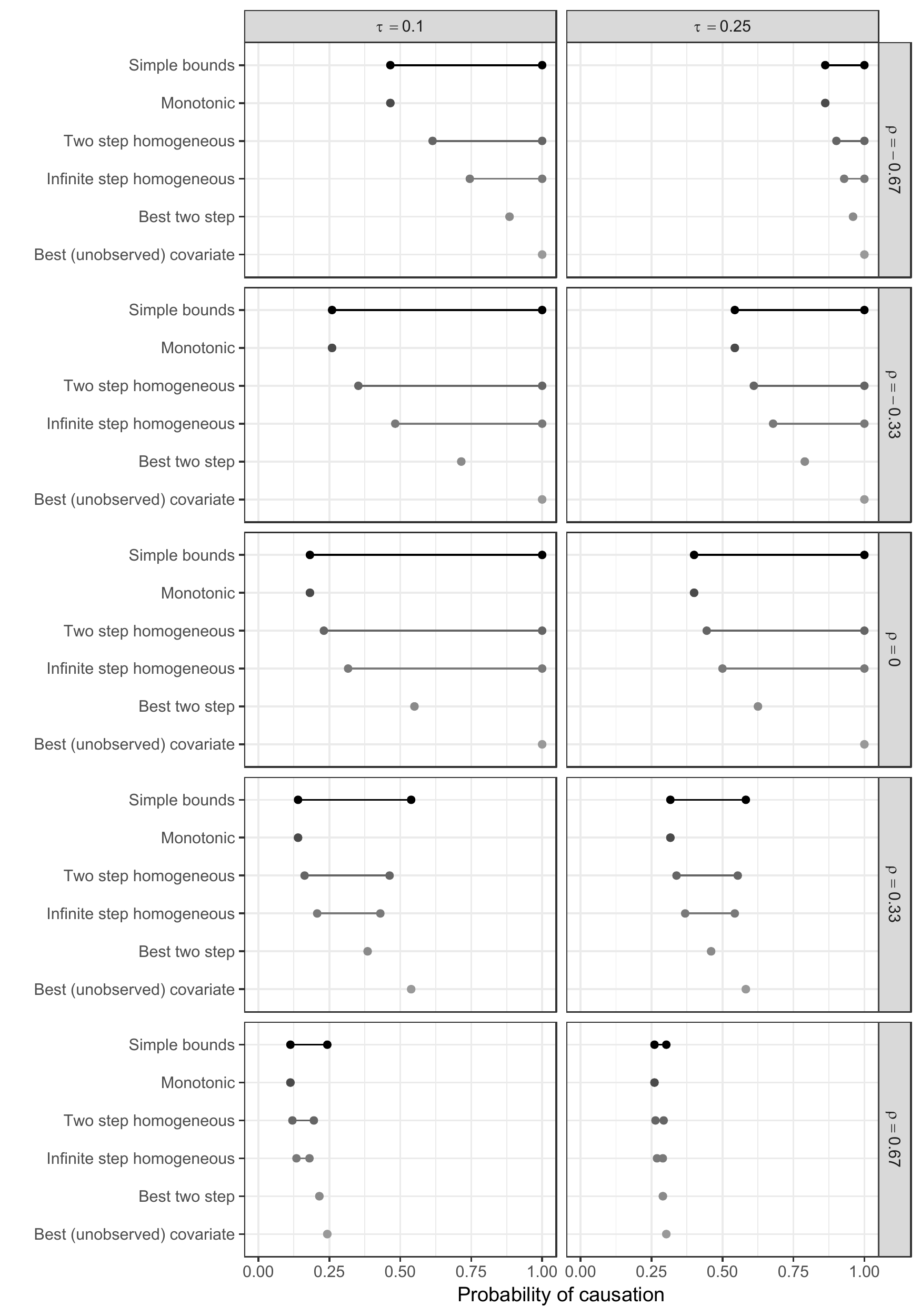}
  \caption{Comparison of bounds on \pc given different auxiliary
    information.  Simple bounds are given by \eqref{simple}.
    Monotonic bounds assume no prevention.  Homogeneous two-step and
    infinite-step bounds, assuming positive evidence observed at every
    step, are given by \eqref{pcbounds2}, \eqref{pcbounds3},
    \eqref{lbinf}, and \eqref{obinf}.  Best two-step bounds are as
    given by $\underline\uub$ or $\underline\oub$ of
    \tabref{resultstable}.  Best bounds based on an unobserved binary
    covariate are as described in \secref{comp}.}
  \label{fig:bounds}
\end{figure}

\section{Conclusion}
\label{sec:conc}

We close with some comments, which may help to guide the collection of
ancillary evidence to improve the bounds on the probability of
causation.  These are based on our general results, as exemplified in
Figure~\ref{fig:bounds}.

\renewcommand{\theenumi}{\arabic{enumi}}

\begin{enumerate}
\item Knowledge of mediation processes, and of positive values for
  some mediators in a particular case, can raise the lower bound on
  the probability of causation, thus providing some evidence against a
  sceptic who doubts that the outcome in the case can be attributed to
  the putative cause.  However, it may well not raise the bound enough
  to convince her.  In contrast, for some processes, observing
  negative evidence on mediators can effectively convince the sceptic
  that the outcome is {\em not\/} the result of the exposure.
\item Observing positive data on homogeneous mediation processes can
  improve the bounds, but there are diminishing returns, and full
  identification is not achieved, even with infinite data.
\item For a homogeneous process, observation in the middle of the
  process is more informative than nearer the edges.
\item Heterogeneous mediation processes can sometimes yield
  identification with minimal auxiliary data gathering:
  \begin{itemize}
  \item A process where $X$ is a necessary condition for a sufficient
    condition for $Y$ yields the largest possible upper bound, and
    identifies the probability of causation.  For example, if it is
    known that the effect of delivering a deworming medicine passes
    uniquely through ingestion, and ingestion is sufficient for
    effective deworming, then evidence of ingestion raises the lower
    bound and identifies the probability of causation.
  \item A process in which $X$ is a sufficient condition for a
    necessary condition for $Y$ yields identification, and {there is
      no gain from gathering data on the mediator}.  For instance if
    ingesting medicine is a sufficient condition for good health, and
    good health is a necessary condition for good school performance,
    then observing ingestion and good school performance is sufficient
    to achieve identification.  There are no additional gains from
    measuring health, since good health is already implied by good
    performance.
  \end{itemize}
\item Potential gains from knowledge of mediation processes are
  typically weaker than potential gains from knowledge of conditions
  under which interventions are more or less effective.  Even when
  covariates are unobserved for the case at hand, knowledge of the
  general effect of covariates can tighten the bounds when some
  subgroups exhibit adverse effects.  On this basis researchers might
  be able to assess whether a search for a suitable covariate could
  lead to improved bounds, and perhaps even identification of the
  probability of
  causation.
\end{enumerate}

\newpage

\renewcommand{\thesection}{A.\arabic{section}} \setcounter{section}{0}
\renewcommand{\theequation}{A.\arabic{equation}}
\setcounter{equation}{0}

\begin{appendices}


	
	
	

  \section{Proof of \thmref{main}}
  \label{sec:mainproof}
  \subsection{Mediators unobserved}

  \begin{description}
  \item[Lower bounds:] $\ulb$ is unchanged by knowledge of the
    mediation process alone and so the largest and smallest values of
    $\ulb$ are $\underline{\ulb} = \overline{\ulb} = \slb$.

  \item[Smallest upper bound:] From \eqref{simpleu} we can see that
    for a degenerate two-term decomposition with $|\rho_1| = 1-\tau_1$
    and $|\rho_2| = 1-\tau_2$, $\uub=\ulb=\slb$.  In this case $\pc$
    is identified.

  \item[Largest upper bound:] It follows from \corref{ubusmall} and
    \eqref{now} that this is achieved when there are no mediators, and
    is thus $\sub$.

  \end{description}

%
%
%
%
%
%
%

  \subsection{Positive data observed at every step}

  We now consider the case where mediators are observed.  Then, for
  the decomposition \eqref{arb},
  \begin{eqnarray*}
    P &=& P_1 \times P_2 \times \ldots \times P_n\\
    \tpc &=& \prod_{i=1}^n \pc_i\\
    \olb &=& \prod_{i=1}^n \slb_i\\
    \oub &=& \prod_{i=1}^n \sub_i
  \end{eqnarray*}

  \begin{description}
  \item[Smallest lower bound]
    \quad\\
    It follows from \lemref{lbineq} that the smallest achievable lower
    bound is
$$\slb = 2\tau/(1+\tau+\rho),$$
which does not require any mediators.

\item[Smallest upper bound]
  \quad\\
  Trivially we must have $\oub \geq \olb \geq \slb$.

  Note now that the decomposition \eqref{ba1} identifies
  $\tpc = \slb$, whence in particular $\oub = \slb$, the smallest
  possible value.
  
\item[Largest lower bound]
  \quad\\
  \begin{lemma}
    \label{lem:lineq}
	$$\frac{2\tau}{1+\tau+\rho} \leq \frac{1+\tau-\rho}2.$$
      \end{lemma}
      \begin{proof}
	This holds since
	$$(1+\tau+\rho)(1+\tau-\rho) -4\tau = (1-\tau)^2 -\rho^2 \geq 0.$$
      \end{proof}

      \begin{lemma}
	\label{lem:r1}
	Let $P = P_1 \times P_2$.  Then
	$$\frac{1+\tau_1-\rho_1}2 \times \frac{1+\tau_2-\rho_2}2 \leq \frac{1+\tau-\rho}2.$$ 
      \end{lemma}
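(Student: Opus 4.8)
The plan is to clear denominators and reduce the claim to the polynomial inequality
$$(1+\tau_1-\rho_1)(1+\tau_2-\rho_2) \leq 2(1+\tau-\rho),$$
and then to eliminate $\tau$ and $\rho$ using the composition identities already established for $n=2$, namely $\tau = \tau_1\tau_2$ from \eqref{taun} and $\rho = \rho_1\tau_2 + \rho_2$ from \eqref{rho2}. After this substitution both sides become polynomials in $\tau_1,\tau_2,\rho_1,\rho_2$ alone, so the whole statement reduces to a single algebraic inequality in the four step-parameters.

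The key step I expect to do the work is to compute the difference $2(1+\tau-\rho) - (1+\tau_1-\rho_1)(1+\tau_2-\rho_2)$ and show that it factorizes cleanly. First I would expand the product and collect the constant, linear, and quadratic terms; then I would group the block $1-\tau_1-\tau_2+\tau_1\tau_2 = (1-\tau_1)(1-\tau_2)$ together with the $\rho$-linear terms $\rho_1(1-\tau_2)$ and $-\rho_2(1-\tau_1)$ and the cross term $-\rho_1\rho_2$. I anticipate the difference collapses to
$$2(1+\tau-\rho) - (1+\tau_1-\rho_1)(1+\tau_2-\rho_2) = (1-\tau_1+\rho_1)(1-\tau_2-\rho_2).$$

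It then remains only to see that this product is non-negative. Here I would invoke the non-negativity of the entries of the one-step transition matrices: by \eqref{P}, $\half(1-\tau_1+\rho_1)$ is the $(0,1)$ entry of $P_1$ and $\half(1-\tau_2-\rho_2)$ is the $(1,0)$ entry of $P_2$, both non-negative by \eqref{rhotau}. Hence each factor is $\geq 0$, the difference is $\geq 0$, the displayed polynomial inequality holds, and dividing through by $4$ recovers the stated form.

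The only real obstacle is spotting the factorization in the middle step; once the difference is organized by pairing the $(1-\tau_1)(1-\tau_2)$ block with the matching $\rho$-terms, the common factor $(1-\tau_1+\rho_1)$ emerges and the remaining factor $(1-\tau_2-\rho_2)$ is forced. Everything else is bookkeeping, and notably no case analysis on the signs of $\rho_1,\rho_2$ is needed, since the factorization exposes exactly the two matrix entries whose non-negativity we require.
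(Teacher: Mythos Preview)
Your proof is correct and arrives at exactly the identity the paper uses; the paper's one-line argument simply gets your factorization for free by noting that $\half(1+\tau_i-\rho_i)$ is the $(0,0)$ entry of $P_i$, so the identity
\[
\frac{1+\tau-\rho}{2} \;=\; \frac{1+\tau_1-\rho_1}{2}\cdot\frac{1+\tau_2-\rho_2}{2} \;+\; \frac{1-\tau_1+\rho_1}{2}\cdot\frac{1-\tau_2-\rho_2}{2}
\]
is just the $(0,0)$ entry of the matrix equation $P=P_1P_2$, with no expansion or factorization needed. Both proofs then finish identically, by observing the extra term is a product of non-negative matrix entries.
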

      \begin{proof}
        Follows from matrix multiplication, on noting that each term
        is the leading entry of its associated transition matrix.
      \end{proof}
      \begin{cor}
        \label{cor:prod}
        Let $P = P_1 \times \ldots \times P_n$.  Then
	$$\prod_{i=1}^n \frac{1+\tau_i-\rho_i}2 \leq \frac{1+\tau-\rho}2.$$ 
      \end{cor}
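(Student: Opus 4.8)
The plan is to argue by induction on $n$, using \lemref{r1} as the inductive engine; the statement is simply the $n$-fold analogue of the two-step inequality already established there. The underlying reason it holds is worth keeping in mind: by \eqref{P}, each factor $\half(1+\tau_i-\rho_i)$ is precisely the leading (top-left) entry of the transition matrix $P_i = P(\tau_i,\rho_i)$, and the factor $\half(1+\tau-\rho)$ on the right is the leading entry of the product $P = P_1 \cdots P_n$. Since all entries of every $P_i$ are non-negative, the leading entry of a product can only exceed the product of the leading entries, which is exactly the claimed bound. So the corollary is really a statement about non-negative matrix products, and \lemref{r1} is its two-factor instance.

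For the induction, the base case $n=1$ is an equality, and $n=2$ is precisely \lemref{r1}. For the inductive step I would set $Q := P_1 \cdots P_{n-1}$, with parameters $(\tau_Q,\rho_Q)$ determined by \eqref{taun} and \eqref{rhon}, so that $P = Q \mid P_n$ is a valid two-term decomposition. The induction hypothesis applied to the $n-1$ factors $P_1,\ldots,P_{n-1}$ gives
$$\prod_{i=1}^{n-1}\frac{1+\tau_i-\rho_i}{2} \;\leq\; \frac{1+\tau_Q-\rho_Q}{2},$$
while \lemref{r1}, applied to $P = Q \mid P_n$, gives
$$\frac{1+\tau_Q-\rho_Q}{2}\cdot\frac{1+\tau_n-\rho_n}{2} \;\leq\; \frac{1+\tau-\rho}{2}.$$
Chaining these two inequalities then yields the desired bound on $\prod_{i=1}^n \half(1+\tau_i-\rho_i)$.

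The only point requiring any care is that, when I multiply the inductive inequality through by $\half(1+\tau_n-\rho_n)$ in order to chain it with the output of \lemref{r1}, I must know this multiplier is non-negative; but as noted above it is the leading entry of the transition matrix $P_n$ and hence lies in $[0,1]$, so the direction of the inequality is preserved. This is not really an obstacle, merely a sanity check, and the whole argument mirrors the ``result for general $n$ follows easily by induction'' remark already used in the proof of \thmref{ubless}.
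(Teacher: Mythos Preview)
Your proof is correct and is exactly the approach the paper intends: the corollary is stated immediately after \lemref{r1} with no separate proof, the implicit argument being precisely the induction you give, and your parenthetical remark about leading entries of non-negative transition matrices is in fact the content of the paper's one-line proof of \lemref{r1} itself.
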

     
      From \eqref{prodbounds}, \lemref{lineq} and \corref{prod} we
      deduce:
      \begin{cor}
	\label{cor:lbless}
	Let $P = P_1 \mid \ldots \mid P_n$.  Then
        $\olb \leq (1+\tau-\rho)/2$.
      \end{cor}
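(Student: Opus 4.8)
The plan is to obtain the bound by chaining the two inequalities just established, applied to the explicit product form of $\olb$ recorded in \eqref{prodbounds}. First I would apply \lemref{lineq}---which holds for every admissible pair $(\tau,\rho)$---separately to each constituent matrix $P_i = P(\tau_i,\rho_i)$, yielding the termwise bounds
$$\frac{2\tau_i}{1+\tau_i+\rho_i} \leq \frac{1+\tau_i-\rho_i}{2}, \qquad i = 1,\ldots,n.$$
Then I would multiply these $n$ inequalities together to obtain
$$\olb = \prod_{i=1}^n \frac{2\tau_i}{1+\tau_i+\rho_i} \;\leq\; \prod_{i=1}^n \frac{1+\tau_i-\rho_i}{2},$$
and finally invoke \corref{prod}, which states precisely that the right-hand side is at most $\frac{1+\tau-\rho}{2}$. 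Combining the two displays gives $\olb \leq (1+\tau-\rho)/2$, as required.

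The one point requiring care---and the only place where the argument could go wrong---is the passage from the termwise inequalities to the product inequality, since multiplying inequalities preserves their direction only when all quantities involved are non-negative. I would verify this explicitly: under the standing convention $\tau_i > 0$ each numerator $2\tau_i$ is positive, and each denominator $1+\tau_i+\rho_i$ is positive because admissibility \eqref{rhotau} forces $\rho_i \geq -(1-\tau_i) > -(1+\tau_i)$; the upper-bounding factors $\tfrac{1}{2}(1+\tau_i-\rho_i)$ are likewise non-negative. With every factor non-negative the product step is legitimate, and no further obstacle remains. The corollary is therefore an immediate consequence of \lemref{lineq} and \corref{prod}, as the surrounding text anticipates.
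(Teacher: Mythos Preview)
Your proof is correct and follows exactly the approach the paper intends: it too derives the corollary from \eqref{prodbounds}, \lemref{lineq} applied termwise, and \corref{prod}. Your added verification that all factors are non-negative (so that the termwise inequalities may be multiplied) is a welcome explicit check of a step the paper leaves implicit.
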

      However the value $\olb = (1+\tau-\rho)/2$ can be achieved,
      specifically for the degenerate two-term decomposition
      \eqref{ab1}, so this is indeed the largest lower bound.  And in
      this case we have identification: $\tpc = (1+\tau-\rho)/2$.

      We note that, since $\rho\leq 1-\tau$, the largest lower bound,
      $(1+\tau-\rho)/2$, can not exceed the simple upper bound
      $\sub=(1+\tau-\rho)/(1+\tau+\rho)$.  Thus any lower bound must
      lie in the simple interval $[\slb,\sub]$.

    \item[Largest upper bound]
      \quad\\
      \begin{lemma}
	\label{lem:rhoineq}
	$$\min\left\{1,\frac{1+\tau-\rho}{1+\tau+\rho}\right\} \leq \min\{1,1-\rho\}.$$
      \end{lemma}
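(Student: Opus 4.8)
The plan is to split on the sign of $\rho$ and, in each case, identify which argument of each $\min$ is the active one; the inequality then either holds with equality or reduces to a one-line factorisation. Throughout I would invoke the standing hypotheses $\tau>0$ and $|\rho|\leq 1-\tau$, which guarantee $1+\tau+\rho>0$ (so the fraction is well defined) and $1-\tau-\rho\geq 0$.

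First I would dispose of the case $\rho<0$. Here $1-\rho>1$, so the right-hand side is $\min\{1,1-\rho\}=1$. On the left, since $\rho<0$ the numerator $1+\tau-\rho$ strictly exceeds the positive denominator $1+\tau+\rho$, hence $\tfrac{1+\tau-\rho}{1+\tau+\rho}>1$ and the left-hand side is also $1$. The claimed inequality then holds, with equality.

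The substantive case is $\rho\geq 0$. Now $1-\rho\leq 1$ makes the right-hand side equal to $1-\rho$, while $1+\tau-\rho\leq 1+\tau+\rho$ gives $\tfrac{1+\tau-\rho}{1+\tau+\rho}\leq 1$, so the left-hand side is $\tfrac{1+\tau-\rho}{1+\tau+\rho}$. The lemma thus reduces to showing $\tfrac{1+\tau-\rho}{1+\tau+\rho}\leq 1-\rho$. I would clear the positive denominator $1+\tau+\rho$, expand $(1-\rho)(1+\tau+\rho)$, and cancel, after which the inequality becomes $0\leq \rho(1-\tau-\rho)$. This is immediate, since $\rho\geq 0$ and $1-\tau-\rho\geq 0$ by the constraint $|\rho|\leq 1-\tau$.

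I do not expect a genuine obstacle here: the only thing to watch is correctly determining the active branch of each $\min$, which is exactly what the sign split on $\rho$ accomplishes. Once the branches are pinned down, the $\rho<0$ case is an equality and the $\rho\geq 0$ case is the single-step factorisation above, so the argument closes cleanly.
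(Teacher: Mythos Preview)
Your proof is correct and follows essentially the same route as the paper: split on the sign of $\rho$, observe the $\rho<0$ case is trivial (both sides equal $1$), and for $\rho\geq 0$ reduce to the identity $(1-\rho)(1+\tau+\rho)-(1+\tau-\rho)=\rho(1-\tau-\rho)\geq 0$. The paper's argument is just a terser statement of exactly this.
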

      \begin{proof}
	Trivial if $\rho\leq 0$.  Otherwise follows from
        $(1-\rho)(1+\tau+\rho) - (1+\tau-\rho)= \rho(1-\tau-\rho) \geq
        0$.
      \end{proof}
      \begin{lemma}
	\label{lem:rr}
	Let $P = P_1 \times P_2$.  Then
	$$\min\{1,1-\rho_1\} \times \min\{1,1-\rho_2\} \leq\min\{1,1-\rho\}.$$
      \end{lemma}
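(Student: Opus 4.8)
The plan is to reduce everything to the single scalar identity $\rho = \rho_1\tau_2 + \rho_2$ from \eqref{rho2}, together with the feasibility constraints $|\rho_i| + |\tau_i| \le 1$ of \eqref{rhotau} and the standing assumption $\tau_i > 0$ (so in particular $0 < \tau_i \le 1$ and $|\rho_i| < 1$). First I would rewrite each factor via $\min\{1,1-a\} = 1 - \max\{0,a\}$; writing $a^+ := \max\{0,a\}$, the claim becomes $(1-\rho_1^+)(1-\rho_2^+) \le 1 - \rho^+$. Each factor on the left lies in $(0,1]$ since $\rho_i < 1$, so the inequality is equivalent to the \emph{master inequality}
$$\rho \;\le\; \rho_1^+ + \rho_2^+ - \rho_1^+\rho_2^+ \;=\; 1 - (1-\rho_1^+)(1-\rho_2^+),$$
because the right-hand side is nonnegative: once $\rho$ is bounded by it, so is $\rho^+ = \max\{0,\rho\}$, and then $1-\rho^+ \ge (1-\rho_1^+)(1-\rho_2^+)$ as required.

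Next I would prove the master inequality by substituting $\rho = \rho_1\tau_2 + \rho_2$ and splitting on the signs of $\rho_1$ and $\rho_2$. Three of the four sign patterns are immediate: whenever at least one of $\rho_1,\rho_2$ is $\le 0$ the cross term $\rho_1^+\rho_2^+$ vanishes and it suffices to check $\rho_1\tau_2 + \rho_2 \le \rho_1^+ + \rho_2^+$, which follows from $\tau_2 \le 1$ and $\rho_i \le \rho_i^+$ (using $\tau_2 \ge 0$ to control the sign of $\rho_1\tau_2$). The only case with genuine content is $\rho_1 > 0$ and $\rho_2 > 0$, where $\rho_i^+ = \rho_i$ and I must show $\rho_1\tau_2 + \rho_2 \le \rho_1 + \rho_2 - \rho_1\rho_2$, i.e.\ $\rho_1\tau_2 \le \rho_1(1-\rho_2)$. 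Dividing by $\rho_1 > 0$ this is exactly $\tau_2 \le 1 - \rho_2$, which is $\tau_2 + \rho_2 \le 1$ — a consequence of \eqref{rhotau} since both quantities are positive.

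I expect the main obstacle to be precisely this last case. The naive additive bound $\rho^+ \le \rho_1^+ + \rho_2^+$ is too weak, because the product of the two minima is smaller than $1 - (\rho_1^+ + \rho_2^+)$ by the cross term $\rho_1^+\rho_2^+$, and retaining that cross term is what forces the sharper feasibility constraint $\tau_2 + \rho_2 \le 1$ on the \emph{second} matrix rather than a crude $\tau_2 \le 1$. Note the asymmetry: in \eqref{rho2} it is $\tau_2$, the effect of the second step, that multiplies $\rho_1$, so the relevant constraint is the one on $P_2$. With the master inequality in hand the lemma follows at once, and — exactly as \corref{prod} extends \lemref{r1} in the lower-bound argument — a straightforward induction on the number of factors then upgrades it to the $n$-fold product $\prod_i \min\{1,1-\rho_i\} \le \min\{1,1-\rho\}$ needed, in combination with \lemref{rhoineq}, to identify the largest upper bound $\overline{\oub} = \min\{1,1-\rho\}$.
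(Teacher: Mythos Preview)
Your proof is correct and follows essentially the same route as the paper: a four-way case split on the signs of $\rho_1,\rho_2$, using $\rho=\rho_1\tau_2+\rho_2$ together with $0<\tau_2\le 1$ for the mixed-sign cases and the constraint $\tau_2+\rho_2\le 1$ from \eqref{rhotau} for the both-positive case. The paper presents the split directly on the $\min$ formulation and, for the both-positive case, simply cites the already-proved inequality \eqref{mod2} (which is exactly your $\tau_2\le 1-\rho_2$ in disguise); your positive-part rewriting $\min\{1,1-a\}=1-a^+$ and the ``master inequality'' are a tidy repackaging of the same argument rather than a different one.
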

      \begin{proof}
        Trivial if both $\rho_1$ and $\rho_2$ (and hence, by
        \eqref{rho2} and the fact that $\tau_2>0$, also $\rho$) are
        negative.
	
        If $\rho_1 \leq 0$, $\rho_2\geq 0$, we have to show
        $\rho_2 \geq \rho$.  This follows from \eqref{rho2}.
        Similarly if $\rho_1 \geq 0$, $\rho_2\leq 0$ (using
        $\tau_2 \leq 1)$.
	
        Finally, if $\rho_1 > 0$, $\rho_2> 0$ (and so also $\rho>0)$,
        the result follows from \eqref{mod2}.
      \end{proof}

      \begin{cor}
        \label{cor:mult}
        Let $P = P_1 \times \ldots \times P_n$.  Then
	$$\prod_{i=1}^n \min\{1,1-\rho_i\} \leq\min\{1,1-\rho\}.$$
      \end{cor}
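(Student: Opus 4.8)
The plan is to prove this by induction on $n$, using \lemref{rr} as the engine for the inductive step; the substantive content has already been isolated there, so what remains is purely its multiplicative propagation along the chain. The base case $n=1$ holds with equality, so nothing needs checking.

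For the inductive step I would exploit associativity of matrix multiplication, grouping the first $n-1$ factors. Write $Q \defeq P_1 \times \ldots \times P_{n-1}$, so that $P = Q \times P_n$, and set $Q = P(\tau_Q,\rho_Q)$. As a product of transition matrices $Q$ is again a valid transition matrix, and since we are in the regime where every $\tau_i > 0$ the final factor satisfies $\tau_n > 0$; hence \lemref{rr} applies to $P = Q \times P_n$ (playing the roles $P_1 \setto Q$, $P_2 \setto P_n$). The inductive hypothesis applied to the $(n-1)$-factor product $Q$ gives
$$\prod_{i=1}^{n-1}\min\{1,1-\rho_i\} \,\leq\, \min\{1,1-\rho_Q\},$$
while \lemref{rr} gives
$$\min\{1,1-\rho_Q\}\times\min\{1,1-\rho_n\} \,\leq\, \min\{1,1-\rho\}.$$

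To chain these I would multiply the first inequality through by $\min\{1,1-\rho_n\}$ and then substitute into the second. The only point requiring care---and the one place the argument could slip---is that multiplying by $\min\{1,1-\rho_n\}$ preserves the direction of the inequality, which needs this factor to be non-negative. But $P_n$ is a valid transition matrix, so by \eqref{rhotau} we have $|\rho_n| \leq 1$, whence $1-\rho_n \geq 0$ and $\min\{1,1-\rho_n\} \geq 0$. Combining the two displays then yields $\prod_{i=1}^n \min\{1,1-\rho_i\} \leq \min\{1,1-\rho\}$, completing the induction. I do not anticipate any genuine obstacle here: this is routine bookkeeping once \lemref{rr} is in hand, the only substantive check being that the grouped product $Q$ still meets the hypotheses needed to invoke that lemma.
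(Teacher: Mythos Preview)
Your proposal is correct and is exactly the intended argument: the paper states \corref{mult} as an immediate corollary of \lemref{rr} without further proof, the implicit step being precisely the induction on $n$ (grouping the first $n-1$ factors and applying the two-term lemma) that you spell out. Your check that $\min\{1,1-\rho_n\}\geq 0$ and that the standing assumption $\tau_i>0$ licenses the use of \lemref{rr} at the inductive step is the only care needed, and you have it.
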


      From \lemref{rhoineq} and \eqref{prodbounds} we deduce:
      \begin{cor}
	\label{cor:ubless}
	For decomposition $P = P_1 \mid \ldots \mid P_n$,
        $\oub \leq \min\{1,1-\rho\}$.
      \end{cor}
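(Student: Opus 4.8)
The plan is to read $\oub$ off as a product of single-step upper bounds and then apply the factor-wise estimate of \lemref{rhoineq} together with the product estimate of \corref{mult}.

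First I would rewrite each factor of $\oub$ from \eqref{prodbounds} in the form handled by \lemref{rhoineq}. The $i$-th factor $\frac{1+\tau_i-|\rho_i|}{1+\tau_i+\rho_i}$ is precisely $\sub_{11}$ for the transition matrix $P_i$, and a short check on the sign of $\rho_i$ identifies it with $\min\{1,(1+\tau_i-\rho_i)/(1+\tau_i+\rho_i)\}$: when $\rho_i\geq 0$ the ratio is already at most $1$, while when $\rho_i<0$ the numerator $1+\tau_i-|\rho_i|$ reduces to $1+\tau_i+\rho_i$, so the factor equals exactly $1$. Thus
\[
  \oub = \prod_{i=1}^n \min\left\{1,\frac{1+\tau_i-\rho_i}{1+\tau_i+\rho_i}\right\}.
\]

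Next I would apply \lemref{rhoineq} to each pair $(\tau_i,\rho_i)$ --- the lemma holds for any admissible transition matrix, in particular each $P_i$, since $\tau_i>0$ and $|\rho_i|\leq 1-\tau_i$ --- to bound the $i$-th factor above by $\min\{1,1-\rho_i\}$. As every factor is non-negative, the whole product is then bounded above by $\prod_{i=1}^n \min\{1,1-\rho_i\}$. Finally I would invoke \corref{mult} to collapse this product to $\min\{1,1-\rho\}$, giving $\oub \leq \min\{1,1-\rho\}$ as required.

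There is no genuine obstacle here: the corollary follows immediately by chaining \lemref{rhoineq} across all $n$ factors and then applying \corref{mult}. The only point needing a moment's care is the factor-wise rewriting above, which is exactly what lets \lemref{rhoineq} be applied term by term; all the substantive case analysis has already been carried out in \lemref{rr} (which treats every sign combination of $\rho_1,\rho_2$) and its inductive extension \corref{mult}. Together with the achievability of $\min\{1,1-\rho\}$ via the decomposition \eqref{C}, this establishes the entry $\overline{\oub}=\min\{1,1-\rho\}$ of \tabref{resultstable}.
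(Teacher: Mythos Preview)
Your proposal is correct and follows essentially the same route as the paper: bound each factor of \eqref{prodbounds} via \lemref{rhoineq}, then collapse the product via \corref{mult}. The paper's one-line justification (``From \lemref{rhoineq} and \eqref{prodbounds}'') is terser and, strictly read, omits the explicit appeal to \corref{mult}; your version spells out both steps and also makes explicit the sign-of-$\rho_i$ rewriting needed to put each factor into the form to which \lemref{rhoineq} applies, which is a genuine (if small) detail the paper leaves implicit.
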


      However the value $\ub = \min\{1,1-\rho\}$ can be achieved.  If
      $\rho\leq 0$, no mediators are required.  If $\rho > 0$, the
      value $\oub = 1-\rho$ is achieved by the two-term decomposition
      \eqref{C}.  By \lemref{rhoineq}, this largest upper bound
      $1-\rho$ is at least as large as the simple upper bound $\sub$
      of \eqref{simple}.

      Since we know that $\olb \leq \sub$, we cannot have
      identification of $\tpc$ in this case unless these inequalities
      become equalities, which only holds when $\rho=1-\tau$.  In fact
      for the decomposition \eqref{C} we have $\olb = \tau$.
    \end{description}

\subsection{Negative data observed at some steps}
The lower bounds at 0 are immediate from Equation \eqref{prodtilde}.
It is easy to verify that the lowest upper bound at 0 is achievable by
the decomposition \eqref{C}, and the highest upper bound at 1 is
achievable from the decompositions in \eqref{ext1} and \eqref{ext2}.
Since these bounds are at 0 and 1 they are the extreme values
obtainable from any process involving some negative data.

\section{Proof of \thmref{mixed}}
\label{sec:mixproofs}

Using \eqref{s00++}--\eqref{s11++} and \eqref{s00+-}--\eqref{s11+-},
we have the following upper bounds for a single step.

For $\rho \geq 0$:
\begin{eqnarray*}
  \ub(00) &=& 1\\
  \ub(01) &=& \gamma'=\gamma\\
  \ub(10) &=& 1\\
  \ub(11) &=& \delta'
\end{eqnarray*}

For $\rho < 0$:
\begin{eqnarray*}
  \ub(00) &=& \delta'\\
  \ub(01) &=& 1\\
  \ub(10) &=& \gamma\\
  \ub(11) &=& 1
\end{eqnarray*}

When $\rho=0$, all these upper bounds are 1, and the upper bound for
any evidence sequence is 1.

Otherwise, $\gamma<1$ does not depend on $n$, while
$\delta'\rightarrow 1$ as $n\rightarrow\infty$.  So there exists
$N\geq 2$ such that $n>N \Rightarrow \gamma' < (\delta')^2$.
Henceforth we suppose $n>N$.  Then we have:
\begin{lemma}
  \label{lem:mixup}
  \begin{eqnarray*}
    \ub(111) &>& \ub(101)\\
    \ub(000) &>& \ub(010)
  \end{eqnarray*}
\end{lemma}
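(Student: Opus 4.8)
The plan is to exploit the multiplicative structure established by \thmref{prodpc}: since $\tpc$ for a mediation chain factors as a product over the observed consecutive pairs, and the bounds multiply across terms, the upper bound for any sequence of observations on three consecutive mediators factors as $\ub(abc) = \ub(ab)\,\ub(bc)$. Thus each of the four quantities in the lemma reduces to a product of two single-step upper bounds, and these are exactly the values tabulated just above in terms of $\gamma$ and $\delta'$ (recalling $\gamma' = \gamma$).

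First I would evaluate the four products, treating the two sign regimes separately. For $\rho \geq 0$ the single-step table gives $\ub(111) = \ub(11)\,\ub(11) = (\delta')^2$, $\ub(101) = \ub(10)\,\ub(01) = 1 \cdot \gamma = \gamma$, $\ub(000) = \ub(00)\,\ub(00) = 1$, and $\ub(010) = \ub(01)\,\ub(10) = \gamma \cdot 1 = \gamma$. For $\rho < 0$ the degenerate entries swap, yielding $\ub(111) = 1$, $\ub(101) = \ub(10)\,\ub(01) = \gamma$, $\ub(000) = (\delta')^2$, and $\ub(010) = \gamma$. In particular $\ub(101) = \ub(010) = \gamma$ in both regimes.

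It then remains only to compare. The inequality $\ub(111) > \ub(101)$ reads $(\delta')^2 > \gamma$ when $\rho \geq 0$, and $1 > \gamma$ when $\rho < 0$; symmetrically, $\ub(000) > \ub(010)$ reads $1 > \gamma$ when $\rho \geq 0$ and $(\delta')^2 > \gamma$ when $\rho < 0$. Both ingredients are already in hand under the standing hypotheses: $\gamma < 1$ holds because the process is non-degenerate with $\rho \neq 0$, and $\gamma < (\delta')^2$ is precisely the defining property of the threshold $N$, valid since we have assumed $n > N$. I do not expect any genuine obstacle here; the only mild subtlety is the clerical one of keeping the two sign cases straight, and the substance of the lemma is simply that, once the products are written out, the two established facts $\gamma < 1$ and $\gamma < (\delta')^2$ deliver both inequalities immediately.
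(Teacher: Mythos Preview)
Your proposal is correct and follows essentially the same route as the paper: compute each three-term upper bound as a product of two single-step upper bounds from the table, obtaining $\ub(101)=\ub(010)=\gamma$ while $\ub(111)$ and $\ub(000)$ are each either $1$ or $(\delta')^2$, and then invoke $\gamma<1$ and the standing hypothesis $\gamma<(\delta')^2$. The paper's proof is slightly terser, noting at once that $\ub(111),\ub(000)\geq(\delta')^2>\gamma$ in all cases rather than separating the sign regimes, but the substance is identical.
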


\begin{proof}
  We have $\ub(111) = \{\ub(11)\}^2 = (\delta')^2$ if $\rho\geq 0$, or
  1 if $\rho< 0$, while $\ub(000) = \{\ub(00)\}^2=1$ if $\rho \geq 0$,
  or $(\delta')^2$ if $\rho< 0$.  In all cases
  $\ub(101) = \ub(010) = \ub(10)\ub(01) = \gamma$, while $\ub(111)$
  and $\ub(000) \geq (\delta')^2 > \gamma$.
\end{proof}

\begin{cor}
  \label{cor: no3}
  The optimal sequence $\bs_0$ can not contain any subsequence of
  repeated values of length greater than 2.
\end{cor}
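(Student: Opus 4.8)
The plan is to prove this by a single local exchange (``surgery'') argument, taking \lemref{mixup} as the engine. Suppose, for contradiction, that the optimal sequence $\bs_0$ did contain a run of identical symbols of length at least $3$. Then somewhere it displays three consecutive equal entries, at positions $i,i+1,i+2$ say, with a common value $a\in\{0,1\}$, i.e.\ the block $aaa$. I would produce a competitor $\bs_0'$ by flipping \emph{only} the central symbol, $s_{i+1}\mapsto\overline a$, turning that block into $a\,\overline a\,a$ and leaving every other entry unchanged; since the run has length at least $3$ the central index $i+1$ is interior ($1\le i+1\le n-1$), so in particular the fixed endpoints $s_0=s_n=1$ are untouched.

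The crucial point is that the objective factorises and the surgery is local. Because $\ub(\bs)=\prod_r\ub(s_r s_{r+1})$ and only $s_{i+1}$ has moved, the sole factors that change are the two adjacent pairs $(s_i,s_{i+1})$ and $(s_{i+1},s_{i+2})$; all other factors cancel in the ratio, so that $\ub(\bs_0')/\ub(\bs_0)=\ub(a\,\overline a\,a)/\ub(aaa)$. Now \lemref{mixup}, which applies here since we are in the regime $n>N$ where $(\delta')^2>\gamma$, gives exactly $\ub(111)>\ub(101)$ and $\ub(000)>\ub(010)$, covering both $a=1$ and $a=0$. Hence the ratio is strictly below $1$ and $\ub(\bs_0')<\ub(\bs_0)$, contradicting the optimality of $\bs_0$. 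Consequently no run of length exceeding $2$ can occur, which is the claim.

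I do not expect a substantial obstacle: the corollary is essentially \lemref{mixup} applied at an arbitrary length-$3$ sub-block of a putative long run. The only items needing care are the bookkeeping of the locality step (confirming that a single interior flip disturbs exactly the two straddling pairs and nothing else) and the admissibility of the competitor. The latter is immediate: $\bs_0'$ still starts and ends with $1$, and it still carries negative evidence, since breaking a run of length $\ge 3$ cannot eliminate every $0$---when $a=0$ the block $a\,\overline a\,a$ retains two $0$'s, and when $a=1$ the flip creates a fresh $0$. Both checks are routine, so the contradiction goes through directly.
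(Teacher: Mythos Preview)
Your argument is correct and is exactly the intended one: the paper presents this corollary as immediate from \lemref{mixup}, and the mechanism you spell out---replacing an interior block $aaa$ by $a\,\overline a\,a$, noting that only the two straddling one-step factors in the product $\ub(\bs)=\prod_r\ub(s_r s_{r+1})$ change, and invoking \lemref{mixup} to see that this strictly lowers $\ub$---is precisely that implicit argument made explicit. Your bookkeeping (interior index, preservation of the endpoint constraints $s_0=s_n=1$, and that the competitor remains a mixed-evidence sequence) is in order.
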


We now consider separately the cases of positive and negative $\rho$.

\begin{description}
\item[1.  $\rho > 0$.]
  \quad\\
  Suppose $\bs_0$ contains a subsequence $00$.  It must then be
  followed by a $1$, so $\bs_0$ contains a subsequence $001$.  Since
  $\ub(001) = \gamma$, while $\ub(011)= \gamma\delta' < \gamma$, on
  replacing this subsequence by $011$ we would achieve a smaller upper
  bound.  This contradiction shows that $\bs_0$ cannot contain any
  successive repeated $0$'s.

  Now suppose $\bs_0$ contains a subsequence $11$.  Consider the first
  appearance of this.  If not at the very end, it must be followed by
  $01$.  Now replace this subsequence $1101$ by $1011$.  Since
  $\ub(1101) = \ub(1011) = \gamma\delta'$, we have not changed
  $\ub(\bs_0)$, but have postponed the first occurrence of $11$.  We
  can thus assume that the first such occurrence (if any) is at the
  very end.

  If now $n$ is even, the first $n-1$ values must be the alternating
  sequence $1010\ldots 1$.  But this can not be followed by $11$,
  since that would produce a subsequence $111$.  We deduce that
  $\bs_0$ must be the full alternating sequence $1010\ldots 1$.  The
  smallest possible upper bound with mixed evidence is thus
  $\nub_n = \ub(\bs_0) = \gamma^{n/2}$.

  If $n$ is odd, there must at least one appearance of $11$.  The
  above argument now delivers as $\bs_0$ the alternating sequence
  $1010\ldots 1$ of length $n$, followed by the final $1$.  We now
  have $\nub_n = \ub(\bs_0) = \gamma^{(n-1)/2}\delta'$.

\item[2.  $\rho <0$.]
  \quad\\
  The argument here is almost, but not quite, a mirror image of that
  above.

  Suppose that $\bs_0$ contains a subsequence $11$.  If not at the
  very end, it must be followed by a $0$ so that $s_0$ contains a
  subsequence $110$.  Now, since $\ub(110)= \gamma$ and
  $\ub(100)=\gamma \delta' < \gamma$, $\bs_0$ cannot contain any
  internal successive repeated 1's.  Also, $\bs_0$ can not end with
  $11$, and hence with $011$, since
  $\ub(001) =\delta’ < \ub(011) = 1$.  So there can be no repeated
  $1$'s.

  Now suppose $\bs_0$ contains a subsequence $00$.  Consider the first
  appearance of this.  If not just before the final $1$, it must be
  followed by $10$.  Now replace this subsequence $0010$ by $0100$.
  Since $\ub(0010) = \ub(0100) = \gamma\delta'$, we have not changed
  $\ub(\bs_0)$, but have postponed the first occurrence of $00$.  So
  the only possibility for two successive $0$s is if $\bs_0$ ends with
  $001$.

  Before the end, we must have an alternating sequence
  $101 \ldots 01$.

  If $n$ is even, we can not then conclude with $001$, so in this case
  we must have the full alternating sequence $1010\ldots 1$.  The
  smallest possible upper bound with mixed evidence is, again,
  $\nub_n = \ub(\bs_0) = \gamma^{n/2}$.

  If $n$ is odd we must have the alternating sequence $1010\ldots 1$
  of length $n-2$, followed by $001$.  We again find
  $\nub_n = \ub(\bs_0) = \gamma^{(n-1)/2}\delta'$.
\end{description}
  
\section{Proofs for \secref{unbound}}
\label{sec:homproofs}

\begin{proof0}{\thmref{lim}}
  Using {\tt Mathematica} \citep{Mathematica}, we obtain expansions
  \begin{eqnarray}
    \label{eq:explb}
    \olb_n &=& \olb_\infty \times \{1 + k/n + O(1/n^2)\}\\
    \label{eq:expub}
    \oub_n &=& \oub_\infty \times \{1 + q/n^2 + O(1/n^3)\}\qquad(\mbox{for }\rho>0)
  \end{eqnarray}
  with
  \begin{eqnarray*}
    \olb_\infty &=& \tau^{\frac{1-\tau+\rho}{2(1-\tau)}}\\
    \oub_\infty &=& \tau^{\frac{\rho}{1-\tau}}.
  \end{eqnarray*}
	
  The expression for $\uub_\infty$ is obtained similarly.
	
  Finally, since $\ulb_n = \slb$ for all $n$, we trivially have
  $\ulb_\infty = \slb$.
\end{proof0}

\begin{proof0}{\lemref{big}}
  In \eqref{explb} and \eqref{expub}, {\tt Mathematica} gives
  \begin{eqnarray*}
    k &=& -\frac{\left\{(1-\tau)^2-\rho^2\right\}\ln ^2(\tau)}{8(1-\tau)^2}\\
    q &=& -\frac{\rho \left\{(1-\tau)^2- \rho^2)\right\} \ln ^3(\tau)}{12 (1-\tau)^3}.
  \end{eqnarray*}

  In particular $k < 0$, $q > 0$.  Thus
$$d_n := \olb_{n+1} - \olb_n = - \olb_\infty\, k/n^2 + O(1/n^3)$$
where the leading term is positive, so $\olb_n$ is eventually
increasing.  Similarly
$$d_{n+1}-d_n =  2\, \olb_\infty\, k/n^3 + O(1/n^4)$$
with negative leading term, so $\olb_n$ is eventually concave in $n$.
A similar argument shows that, for $\rho>0$, $\oub_n$ is eventually
decreasing and convex in $n$.  We note that the convergence of
$\oub_n$ to its limit is at a faster rate than for $\olb_n$.

The behaviour of $\uub_n$ is obtained similarly (the limit being
approached at rate $1/n$).
\end{proof0}



%
%
%
%




\begin{proof0}{\lemref{double}}	  	
  Consider the $n$-part homogeneous decomposition
  $P = P_1\mid \ldots \mid P_n$.  Now replace each $P_i$ by its
  homogeneous 2-part decomposition $P_i = Q_{i1} \mid Q_{i2}$, so
  creating the $2n$-part homogeneous decomposition
  $P = Q_{11}\mid Q_{12}\mid \ldots\mid Q_{n1}\mid Q_{n2}$.
	
  By \corref{ubusmall} and \eqref{now} we see that $\uub$ decreases on
  making these replacements.
	
  The argument of \secref{specobs} shows that $\olb$ is increased by
  these replacements.
	
  To show the result for $\oub$ it is enough to show that
  $\oub < \sub$ for a two-term homogeneous decomposition with
  $\rho>0$.  That is to say,
  \begin{displaymath}
    \frac{(1+\tau'-\rho')^2}{(1+\tau'+\rho')^2} < \frac{1+\tau-\rho}{1+\tau+\rho}
  \end{displaymath}
  or equivalently
  \begin{displaymath}
    2\rho\left\{(1+\tau')^2+ \rho'^2\right\} <  4\rho'(1 +\tau')(1+\tau).
  \end{displaymath}  
  Noting that $\rho = \rho'(1+\tau')$ and $\tau = \tau'^2$, this
  becomes
  \begin{displaymath}
    \left(1+\tau'\right)^2+ \rho'^2  <  2\left(1+\tau'^2\right),
  \end{displaymath}
  equivalent to $\rho'^2 < (1- \tau')^2$, which holds since, by
  \eqref{tau} and \eqref{rho},
  $\rho'/(1-\tau') = \rho/(1-\tau) \in (0,1)$ by assumption.
	
\end{proof0}

\end{appendices}

\newpage

\bibliographystyle{apsr} \bibliography{strings,bib}

\end{document}